\definecolor{green}{HTML}{2ECC71}
\definecolor{blue}{HTML}{3498DB}
\definecolor{red}{HTML}{E74C3C}
\definecolor{orange}{HTML}{FD6A02}
\def\@endtheorem{\endtrivlist}
\Crefname{paragraph}{\S}{\SS}
\Crefname{equation}{}{}
\Crefname{enumi}{}{}
\Crefname{conditioni}{Condition}{Conditions}
\Crefname{conditionalti}{Condition}{Conditions}
\newtheorem{theorem}{Theorem}[section]
\newtheorem*{theorem*}{Theorem}
\Crefname{theorem}{Theorem}{Theorems}
\Crefname{theoremintro}{Theorem}{Theorems}
\newtheorem{lemma}[theorem]{Lemma}
\Crefname{lemma}{Lemma}{Lemmas}
\Crefname{proposition}{Proposition}{Propositions}
\Crefname{corollary}{Corollary}{Corollaries}
\Crefname{conjecture}{Conjecture}{Conjectures}
\newtheorem{assumption}{Assumption}
\theoremstyle{definition}
\Crefname{example}{Example}{Examples}
\newtheorem*{example*}{Example}
\Crefname{assumption}{Assumption}{Assumptions}
\Crefname{definition}{Definition}{Definitions}
\Crefname{question}{Question}{Questions}
\theoremstyle{remark}
\newtheorem{remark}[theorem]{Remark}
\Crefname{remark}{Remark}{Remarks}
\numberwithin{equation}{section} 
\DeclarePairedDelimiter{\paren}{\lparen}{\rparen}
\DeclarePairedDelimiter{\bracket}{\lbrack}{\rbrack}
\DeclarePairedDelimiter{\set}{\lbrace}{\rbrace}
\DeclarePairedDelimiter{\abs}{\lvert}{\rvert}
\DeclarePairedDelimiter{\norm}{\lVert}{\rVert}
\DeclarePairedDelimiterX{\psh}[2]{\langle}{\rangle}{#1, #2}
\DeclarePairedDelimiterX{\pairing}[2]{\langle}{\rangle}{#1 \vert #2}
\DeclarePairedDelimiterXPP{\Exp}[1]{\exp}{\lparen}{\rparen}{}{#1}
\DeclarePairedDelimiterXPP{\Log}[1]{\log}{\lparen}{\rparen}{}{#1}
\DeclarePairedDelimiterXPP{\Inf}[1]{\inf}{\lbrace}{\rbrace}{}{#1}
\DeclarePairedDelimiterXPP{\Sup}[1]{\sup}{\lbrace}{\rbrace}{}{#1}
\DeclarePairedDelimiterXPP{\Max}[1]{\max}{\lbrace}{\rbrace}{}{#1}
\DeclarePairedDelimiterXPP{\Min}[1]{\min}{\lbrace}{\rbrace}{}{#1}
\DeclareMathOperator{\tr}{Tr}
\DeclareMathOperator{\dom}{\mathbb{D}om}
\DeclareMathOperator{\esp}{\mathbf{E}}
\DeclareMathOperator{\prob}{\mathbf{P}}
\DeclareMathOperator{\var}{\mathbf{Var}}
\DeclareMathOperator{\law}{\mathbf{law}}
\newcommand{\given}[1][]{%
  \nonscript\:#1\vert
  \allowbreak
  \nonscript\:
\mathopen{}}
\DeclarePairedDelimiterXPP{\Prob}[1]{\prob}[]{}{#1}
\DeclarePairedDelimiterXPP{\Esp}[1]{\esp}[]{}{#1}
\DeclarePairedDelimiterXPP{\Var}[1]{\var}[]{}{#1}
\DeclarePairedDelimiterXPP{\Law}[1]{\law}[]{}{#1}
\author[J.\ ANGST]{Jürgen ANGST}
\address{IRMAR, Université de Rennes 1}
\email{jurgen.angst@univ-rennes.fr}
\author[R.\ Herry]{Ronan HERRY}
\address{IRMAR, Université de Rennes 1}
\email{ronan.herry@univ-rennes.fr}
\urladdr{https://orcid.org/0000-0001-6313-1372}
\author[D.\ Malicet]{Dominique MALICET}
\address{LAMA, Université Gustave Eiffel}
  \email{dominique.malicet@univ-eiffel.fr}
  \urladdr{https://orcid.org/0000-0003-2768-0125}
  \author[G.\ Poly]{Guillaume POLY}
\address{IRMAR, Université de Rennes 1}
  \email{guillaume.poly@univ-rennes.fr}
\title[Total variation bounds for fluctuations of linear statistics of $\beta$-ensembles]{Sharp total variation rates of convergence \\for fluctuations of linear statistics of $\beta$-ensembles}
\begin{document}

\begin{abstract}
In this article, we revisit the question of fluctuations of linear statistics of beta ensembles in the single cut and non-critical regime for general potentials $V$ under mild regularity and growth assumptions. Our main objective is to establish sharp quantitative Central Limit Theorems (CLT) for strong distances, such as the \textit{total variation} distance, which to the best of our knowledge, is new for general potentials, even qualitatively. Namely, setting $\mu_V$ the equilibrium measure, for a test function $\xi \in \mathscr{C}^{14}$, we establish the convergence in total variation of $X_n=\sum_{i=1}^n \xi(\lambda_i)-n\langle \xi,\mu_V\rangle$ to an explicit Gaussian variable at the sharp speed $1/n$. Under the same assumptions, we also establish multivariate CLTs for vectors of linear statistics in $p-$Wasserstein distances for any $p\ge 1$, with the optimal rate $1/n$, a result which already in dimension one sharpens the speed of convergence established in the recent contribution \cite{LLW} as well as the required regularity on the test functions.
A second objective of this paper, in a more qualitative direction, is to establish the so-called \textit{super-convergence} of linear statistics, that is to say the convergence of all derivatives of the densities of $X_n$ uniformly on $\mathbb{R}$, provided that $\xi\in\mathscr{C}^\infty(\mathbb{R})$ and is not too degenerated in some sense. 
~
\smallskip
\\
As far as quantitative results are concerned, our approach shares similar tools with the latter reference as we also rely on Stein's method and integration by part formalism provided by the \textit{Dyson generator} $\mathsf{L}$. Nevertheless, we adopt a different and innovative strategy which consists in proving that, up to centering, the above linear statistics $X_n$ can be written in the form $\mathsf{L} F_n+Z_n$ where $Z_n$ is some small remainder and $\Gamma[X_n,F_n]$ converges to some constant as direct consequences of the law of large numbers. We emphasize that this approach bypasses the costly requirement of the invertibility of the Markov operator which is central in \textit{Malliavin-Stein's} method and seems robust enough to be implemented for various models of Gibbs measures. On the qualitative side, the proof of the super-convergence of densities, for its part, is a consequence of the existence of negative moments for the carr\'e du champ of linear statistics, associated with integration by parts techniques.
\end{abstract}
\maketitle%
\setcounter{tocdepth}{1}
\tableofcontents%

\section{Introduction and statement of the results}

\subsection{Overview of our contributions}

Given a parameter $\beta > 0$ interpreted  as an inverse temperature, the \emph{$\beta$-ensemble}, with $n$ particles associated with a continuous potential $V \colon \mathbb{R} \to \mathbb{R}$ such that $\liminf_{x \to \pm \infty} V(x) - \log \abs{x} > 0$, is the Gibbs probability distribution on $\mathbb{R}^{n}$ associated with the \emph{energy}
\begin{equation*}
  H_{n}(\lambda) := \sum_{i < j} \log \frac{1}{\abs{\lambda_{i} - \lambda_{j}}} + n \sum_{i} V(\lambda_{i}).
\end{equation*}
Namely, the $\beta$-ensemble is the probability measure
\begin{equation}\label{eq:beta}
  \prob(\mathtt{d} \lambda) = \prob_{n, \beta}(\mathtt{d} \lambda) := \frac{1}{Z_{n,\beta}} \mathrm{e}^{-\beta H_{n}(\lambda)} \mathtt{d} \lambda,
\end{equation}
where $Z_{n,\beta} := \int \mathrm{e}^{-\beta H_{n}} \mathtt{d} \lambda$ is the \emph{partition function}, and $\mathtt{d} \lambda$ stands for the Lebesgue measure on $\mathbb{R}^{n}$.
In this paper, we establish both qualitative and quantitative Central Limit Theorems (CLT) for \emph{linear statistics} of $\beta$-ensemble, which are random variables of the form
\begin{equation*}
  X_{n} := \sum_{i=1}^{n} \xi(\lambda_{i}) - n \int \xi \mathtt{d} \mu_{V}, \qquad n \in \mathbb{N},
\end{equation*}
for some non-constant test function $\xi \colon \mathbb{R} \to \mathbb{R}$ satisfying some mild regularity assumptions, where $\mu_{V}$ is the so-called \emph{equilibrium measure}, and where the random vector $(\lambda_{1}, \dots, \lambda_{n})$ is drawn from the Gibbs measure $\prob$.
\par
\medskip
Our first main results quantifies precisely the fluctuations of such linear statistics under mild assumptions on the potential $V$ and the test function $\xi$.
Whenever $V \in \mathscr{C}^{7}(\mathbb{R})$ is semi-convex, regular, and that we are in the single-cut regime (see \cref{ass:single-cut} below), we indeed obtain a CLT for linear statistics, in both total variation and $p-$Wasserstein distances ($1 \leq p < \infty$), with the following rates:
\begin{itemize}[nosep,wide]
  \item (\cref{th:normal-approximation-optimal}) If $\xi \in \mathscr{C}^{{14}}(\mathbb{R})$, we obtain the optimal rate $\frac{1}{n}$.
  \item (\cref{th:normal-approximation}) If $\xi \in \mathscr{C}^{6}(\mathbb{R})$, we obtain the almost optimal rate $\frac{1}{n^{1-\alpha}}$ for all $\alpha > 0$.
  \item (\cref{th:normal-approximation-lipschitz}) If $\xi \in \mathscr{C}^{1,\gamma}(\mathbb{R})$ with $\gamma \in (0,1)$, we obtain an explicit rate of convergence depending on the Hölder exponent $\gamma$.
\end{itemize}
\par
\medskip
Our bounds in $p-$Wasserstein distance are furthermore obtained in the multivariate case.
In fact, we obtain precise rates of convergence, with explicit upper bounds involving $n$, $\beta$ and norms of derivatives of $\xi$. This could allow for limit theorems with varying temperature and/or with a test functions $\xi$ also depending on $n$.
\par
\medskip
Our next main result Theorem \ref{th:super-convergence} is of qualitative nature and show that the convergence in law of a linear statistic to a Gaussian automatically upgrades to a very strong form of convergence. Namely, if the potential $V$ and the test function $\xi$ are of class $\mathscr{C}^{\infty}$ and if the latter is not degenerate is some sense,  then $(X_{n})$ \emph{super-converges} to a Gaussian (see Section \ref{sec.super.state} below for definition of the super-convergence).
Informally it means that the density of $(X_{n})$ converges in the $\mathscr{C}^{\infty}$-topology to that of a Gaussian. In particular, it entails that the convergence of linear statistics holds in relative entropy.

\subsection{Motivations}

The $\beta$-ensembles, also known as \emph{$1d$-log gas}, are related to random matrix theory.
We refer to the monographs \cite{PasturShcherbina,Forrester} for details on the subject, as well as \cite{Serfaty} for a broader introduction to Coulomb gases.
Indeed, in the case $\beta \in \{1,2,4\}$ and $V(x)=x^2$, the corresponding $\beta$-ensemble describes the joint law of the spectrum of an $n \times n$ random matrix whose density is proportional to $\exp(-\frac{\beta n}{4} \tr V(M)) \mathtt{d} M$, where $\mathrm{d} M$ is the Haar measure on the sets of symmetric, hermitian, or symplectic matrices respectively.
The observation that certain $\beta$-ensembles relate to eigenvalues of Gaussian ensembles goes back at least to \cite{Dyson}.

The spectral macroscopic properties of large Gaussian matrices first observed by \cite{Wigner}, actually extend to more general $\beta$-ensembles, and it is by now well-understood, see for instance \cite[Thm.\ 11.1.2]{PasturShcherbina}, that
\begin{equation*}
  \frac{1}{n} \sum_{i=1}^{n} \xi(\lambda_{i}) \xrightarrow[n \to \infty]{a.s} \int \xi \mathtt{d} \mu_{V},
\end{equation*}
where the non-random \emph{equilibrium measure} $\mu_{V}$ is the unique minimizer of the \emph{mean-field energy}
\begin{equation*}
  \mathcal{I}_{V}(\mu) := \int V \mathtt{d} \mu - \frac{1}{2} \iint \log \abs{x-y} \mu(\mathtt{d}x) \mu(\mathtt{d} y).
\end{equation*}

In this article, our goal is to study the fluctuations associated with the above ``law of large numbers'', with different motivations that we detail below. 
\subsubsection{Sharp quantitative Central Limit Theorems}
The seminal work of \cite{Johansson} shows that, in the matrix case and under suitable assumptions on the potential $V$ and the test function $\xi$, the fluctuations of $\sum_{i=1}^{n} \xi(\lambda_{i})$ around the value $n \int \xi \mathtt{d} \mu_{V}$ are Gaussian. The quantitative aspect of this convergence has attracted a lot of attention in the last decade: $i)$ in \cite{Chatterjee}, total variation bounds are provided in the matrix case via second order Poincar\'e inequalities; $ii)$ in \cite{Bao23}, still in the matrix case, near optimal rates of convergence are derived for Kolmogorov distance; $iii)$ for general $\beta$-ensembles, polynomial rates of convergence are obtained for $2-$Wasserstein distance in \cite{LLW} using a variant of Stein's method due to Meckes.
However, in the last reference, the rates obtained are at most $O(n^{-2/3+\varepsilon})$ for general potential and of near optimal order $O(n^{-1+\varepsilon})$ only in the quadratic case. 
Our primary motivation in this work is to derive total variation estimates, which is a much stronger distance than the Wasserstein ones, but also to get optimal rate of convergence of order $O(n^{-1})$.
We also provide sharp rates of convergence in Wasserstein metric in both univariate and multivariate settings, and our findings require less regularity on the test functions than previous results on this question, see \cite{LLW,BLS}.

\subsubsection{Implementing a robust Stein's approach for \texorpdfstring{$\beta$}{β}-ensembles}
Historically, Stein's method \cite{Stein} has proved its remarkable efficientcy to obtain quantitative CLTs in strong probabilistic metrics such as total variation or relative entropy.
Despite numerous applications to various probabilistic models such as:
\begin{enumerate*}[(a)]
  \item Erdös--Rényi random graphs \cite{Ross};
  \item infinite-dimensional Gaussian fields \cite{NourdinPeccati};
  \item Poisson point processes \cite{PeccatiReitzner};
  \item free probability \cite{FathiNelson};
\end{enumerate*}
to the best of our knowledge, Stein's method to study $\beta$-ensemble has only been implemented in \cite{LLW,HG21}, through a variant due to Meckes \cite{Meckes}, which in this context consists in approaching linear statistics by infinite sums of approximate eigenvectors of the Dyson generator. This demanding task requires strong regularity assumptions on the test function $\xi$ and prevents one to both get optimal rates and to handle total variation metric. A second motivation for this works is to develop a Stein's method for $\beta$-ensembles, sufficiently robust and general  
to be possibly applied to other models in statistical physics involving Gibbs measures.

\subsubsection{Super-convergence phenomenon}
For sums of independent and identically distributed random variables, due to the convolution structure and  provided the common law has some small initial regularity, the regularity in fact improves along the classical CLT. In particular, it is possible to reinforce the classical convergence in law to the get the $\mathscr C^{\infty}$ convergence of densities, see \cite{LionsToscani}.
This particular behavior also appears in the free CLT \cite{BercoviciVoiculescu}, from which the terminology of \emph{super-convergence} proceeds.
Recently, three of the authors have revisited this regularization phenomenon on Wiener chaoses \cite{HMPSuper} and for quadratic forms \cite{HMPQuadratic}.
In the setting of $\beta$-ensembles, despite the linear nature of the statistics in consideration, no convolution structure arises due to the interaction between the particles. Establishing better-than-expected limit theorems in this dependent setting serves as the third motivation of this article.

\subsection{Statement of the main results}

Let us now precise our assumptions, fix our notations and state formally our main results.

\subsubsection{Assumption and notations}\label{sec:notations} In the rest of the paper, we will always assume that the potential $V$ satisfies the following conditions. Note that the latter are classical, in particular they coincide with the assumptions in \cite{LLW}.

\begin{assumption}\label{ass:single-cut}The potential $V$ and associated equilibrium measure $\mu_{V}$ are such that
  \begin{itemize}
    \item (Smoothness) $V \in \mathscr{C}^{7}(\mathbb{R})$.
    \item (Single-cut) The equilibrium measure $\mu_{V}$ is supported on the interval $\Sigma_V:=[-1,1]$.
    \item (Semi-convexity) $\inf_{\mathbb{R}} V'' > - \infty$.
    \item (Regularity) The measure $\mu_{V}$ has a positive density with respect to the semi-circle law.
  \end{itemize}
\end{assumption}
Recall that the measure $\mu_V$ is here the unique minimizer of the mean-field energy $ \mathcal{I}_{V}$. In particular, it satisfies the Euler--Lagrange equation, for some constant $C_{V} \in \mathbb{R}$:
\begin{equation*}
  V(x)- \int \log(|x-y|) \mu_V(\mathtt{d} y) = C_V, \qquad \forall x \in \Sigma_{V}.
\end{equation*}
Under the above assumptions, the equilibrium measure $\mu_V$ admits a density with respect to the semi-circular law $\mu_{sc}(\mathtt{d}x):=\frac{2}{\pi} \sqrt{1-x^2}\mathds{1}_{[-1,1]}(x)\mathtt{d}x$, namely  setting $\rho(\mathtt{d}x):= \frac{\mathtt{d}x}{\pi \sqrt{1-x^2}}$
\[
\mu_V(\mathtt{d}x) = S(x)\mu_{sc}(\mathtt{d}x), \quad \text{where} \quad S(x):=\frac{1}{2} \int_{-1}^1 \frac{V'(x)-V'(y)}{x-y} \rho(\mathtt{d}y).
\]
As already mentioned, our main objective is to provide (multivariate) qualitative and quantitative CLTs for linear statistics of $\beta$-ensembles.
We consider test functions $\xi_{1}, \dots, \xi_{d} \colon \mathbb{R} \to \mathbb{R}$ and we define the random vector $X = (X_{1}, \dots, X_{d})$ by
\begin{equation*}
  X_{k} := \sum_{i=1}^{n} \xi_{k}(\lambda_{i}) - n \int \xi_{k} \mathtt{d} \mu_{V}, \quad 1\leq k \leq d.
\end{equation*}
For such a vector $X$, as $n$ goes to infinity, following Equations $(1.10)$ and $(1.11)$ in \cite{LLW}, the candidate limiting mean $m=(m_i)_{1\leq i \leq d}$ and covariance matrix $C=(c_{i,j})_{1\leq i,j,d}$ are given by
\begin{equation*}
m_i:=\left(  \frac{1}{2}-\frac{1}{\beta}\right)\left[\frac{\xi_i(-1)+\xi_i(1)}{2}  -\int_{\Sigma_V} \xi_i(x)\rho(\mathtt{d}x)-\frac{1}{2} \int_{\Sigma_V^2} \frac{S'(x)}{S(x)} \frac{\xi_i(x)-\xi_i(y)}{x-y} \rho(\mathtt{d}y) \mu_{sc}(\mathtt{d}x)\right],
\end{equation*}
\begin{equation*}
  c_{ij} := \frac{1}{2\beta} \int \frac{\xi_{i}(x) - \xi_{i}(y)}{x - y} \frac{\xi_{j}(x) - \xi_{j}(y)}{x - y} (1 - xy) \rho(\mathtt{d}x)\rho(\mathtt{d}y).
\end{equation*}
The above matrix $C$ is well-defined as soon as the $\xi_{i}$'s are $1/2$-Hölder continuous.
We shall see in Section \ref{sec.invert} below that $C$ is indeed a covariance matrix as soon as 
\begin{equation}\label{eq:free}
  1, \xi_{1}, \dots, \xi_{d} \ \text{are linearly independent}.
\end{equation}

For an open set $U \subset \mathbb{R}$, we write $\mathscr{C}^{r}(U)$ for the space of $r$ times continuously differentiable functions on $U$ with bounded derivatives.
We endow it with the Banach norm
\begin{equation*}
\norm{\xi}_{\mathscr{C}^{r}(U)} := \max_{r' \leq r} \sup_{x \in U} \abs{\xi^{(r')}(x)}.
\end{equation*}
When $\xi = (\xi_{1}, \dots, \xi_{d})$, we also write
\begin{equation*}
  \norm{\xi}_{\mathscr{C}^{r}(U)} = \sum_{k=1}^{d} \norm{\xi_{k}}_{\mathscr{C}^{r}(U)}.
\end{equation*}
We write indifferently $\abs{\cdot}$ for the absolute value of a real number, the Euclidean norm of a vector, or the Euclidean norm of a square matrix, also known as its Hilbert--Schmidt norm.
For a square symmetric matrix $A$, we write $\norm{A}_{op}$ for its \emph{operator norm}, that is its largest singular value.
Given two random variables $X$ and $Y$, the \emph{$p-$Wasserstein distance} is
\begin{equation*}
  \mathbf{W}_{p}(X, Y) := \Inf*{ \Esp*{ \abs{X' - Y'}^{p} } : X' \overset{\law}{=} X, Y' \overset{\law}{=} Y  }.
\end{equation*}
The Wasserstein distances only depends on $X$ and $Y$ through their respective laws but we favor a probabilistic notation more suited for the approximation theorems we establish.
The $p-$Wasserstein distance induces a topology corresponding to convergence in law together with convergence of the $p$-th moment \cite[Thm.\ 6.9]{Villani}.
We also work with the total variation distance of real-valued random variables
\begin{equation*}
  \begin{split}
    \mathbf{TV}(X,Y) & := \Sup*{ \Prob{X \in B} - \Prob{Y \in B} : B \ \text{Borel} }. 
  \end{split}
\end{equation*}

Let us finally define
\begin{align*}
  & A_{\beta} := \paren*{ \frac{1}{\beta}  \norm{C^{1/2}}_{op} \abs{C^{-1}} \norm{N}_{L^{p}} + \abs*{ \frac{1}{2} - \frac{1}{\beta} } \norm{C^{1/2}}_{op}};
\\& a_{\beta} := \paren*{ \frac{1}{\beta} \frac{1}{\sigma^{2}} + \abs*{ \frac{1}{2} - \frac{1}{\beta} } \frac{\sqrt{\pi}}{2} }.
\end{align*}
\subsubsection{Quantitative CLT in total variation and Wasserstein distances} 
We can now state our main results concerning the quantitative behavior of the fluctuations of linear statistics of $\beta$-ensembles. The following bounds are optimal or nearly optimal depending on the regularity of the test functions. 
 
\begin{theorem}\label{th:normal-approximation}
Let us fix $1 \leq p < \infty$ and $\alpha > 0$. There exists $K_{p,\alpha} > 0$ such that
for all test functions $\xi_{1}, \dots, \xi_{d} \in \mathscr{C}^{{6}}(\mathbb{R})$ satisfying \cref{eq:free}, we have, with $N$ a standard Gaussian in $\mathbb R^d$
  \begin{equation*}
    \mathbf{W}_{p}(X, C^{1/2}N + m) \leq K_{p,\alpha} A_{\beta} \frac{\norm{\xi}_{\mathscr{C}^{6}(\mathbb{R})} + \norm{\xi}_{\mathscr{C}^{2}(\mathbb{R})}^{2}}{n^{1-\alpha}} .
  \end{equation*}
  In the univariate case $d=1$, with $\sigma:=\sqrt{c_{11}}$ and $m=m_1$, we also have
  \begin{equation*}
    \mathbf{TV}(X, \sigma N + m) \leq K_{1,\alpha} a_{\beta} \frac{\norm{\xi}_{\mathscr{C}^{6}(\mathbb{R})} + \norm{\xi}_{\mathscr{C}^{2}(\mathbb{R})}^{2}}{n^{1-\alpha}}.
  \end{equation*}
\end{theorem}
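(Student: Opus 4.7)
The plan is to implement Stein's method directly via the Dyson carré du champ, as announced in the abstract, without inverting the Dyson generator $\mathsf{L}$. I would look for a functional $F_{n}$ such that
\[
  X - m = \mathsf{L} F_{n} + Z_{n},
\]
with $Z_{n}$ a small $L^{1}$-remainder at scale $n^{-1+\alpha}$. For any smooth test function $\phi$, the integration-by-parts identity $\esp[G \, \mathsf{L} F] = - \esp[\Gamma[F,G]]$ then rewrites
\[
  \esp\bigl[(X - m)\phi(X)\bigr] \;=\; - \esp\bigl[\phi'(X)\,\Gamma[F_{n}, X]\bigr] \;+\; \esp\bigl[Z_{n}\, \phi(X)\bigr],
\]
with the obvious matrix analogue when $d \geq 2$. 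This is exactly the algebraic input to run Stein's comparison against the Gaussian of covariance $C$ and mean $m$.

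\textbf{Construction of $F_{n}$.} Applied to a symmetric sum $\sum_{i} f(\lambda_{i})$, the generator $\mathsf{L}$ reduces, at leading order in $n$, to a linear statistic of $\mathsf{L}_{V} f$, where $\mathsf{L}_{V}$ is an integro-differential operator built from the Hilbert transform against $\mu_{V}$. I would therefore take $F_{n} := \sum_{i} f(\lambda_{i})$ with $f$ the solution of the deterministic master equation
\[
  \mathsf{L}_{V} f \;=\; \xi - \langle \xi, \mu_{V}\rangle \qquad \text{on } \Sigma_{V},
\]
inverted in closed form via the classical Tricomi-type formula on $[-1,1]$, as done in \cite{LLW,BLS}; the solution inherits the regularity of $\xi$ up to the loss of a couple of derivatives. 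The remainder $Z_{n}$ then decomposes into: (i) edge corrections from the Hilbert inversion at $\pm 1$; (ii) $1/n$ second-order bulk terms involving $f''$; (iii) a linear statistic of lower order whose concentration at rate $n^{-1+\alpha}$ follows from the standard variance bound for smooth linear statistics of $\beta$-ensembles in the single-cut off-critical regime.

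\textbf{Closing the Stein equation.} For the multivariate Wasserstein statement, I would fix a smooth target $\phi$ with bounded derivatives up to order $p+1$, and build the associated Stein potential $\psi$ solving
\[
  \tr\bigl(C \cdot \mathrm{Hess}\,\psi(x)\bigr) - \nabla \psi(x) \cdot (x - m) \;=\; \phi(x) - \esp\phi\bigl(C^{1/2}N + m\bigr),
\]
via the Ornstein--Uhlenbeck semigroup, with the customary bound $\|\nabla^{k+1}\psi\|_{\infty} \lesssim \|\nabla^{k}\phi\|_{\infty}$. The identity above evaluated at $\phi \leftarrow \nabla\psi$ then yields
\[
  \bigl\lvert \esp\phi(X) - \esp\phi(C^{1/2}N+m)\bigr\rvert \;\leq\; \|\nabla^{2}\psi\|_{\infty}\, \esp\bigl\lvert \Gamma[F_{n},X] - C\bigr\rvert \;+\; \|\nabla\psi\|_{\infty}\, \esp\lvert Z_{n}\rvert,
\]
and Kantorovich duality for $\mathbf{W}_{p}$ gives the first bound. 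For the univariate $\mathbf{TV}$-bound, I would replace $\phi$ by the indicator of a Borel set; the corresponding Stein solution is only Lipschitz, and one further one-dimensional integration by parts (using the $\mathsf{L}$-carré du champ once more, see the standard Nourdin--Peccati trick) removes the missing derivative at the cost of a factor $\sigma^{-2}$, producing the stated total variation bound.

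\textbf{Main obstacle.} The heart of the proof is to establish
\[
  \esp\bigl\lvert \Gamma[F_{n}, X] - C \bigr\rvert \;+\; \esp\lvert Z_{n}\rvert \;\lesssim\; \frac{\lVert \xi\rVert_{\mathscr{C}^{6}(\mathbb{R})} + \lVert \xi\rVert_{\mathscr{C}^{2}(\mathbb{R})}^{2}}{n^{1-\alpha}}.
\]
Both quantities are smooth linear or bilinear statistics of the eigenvalues, and their control reduces to concentration estimates for such statistics, accessible through the loop equations and the rigidity of $\beta$-ensembles in the single-cut regime; the logarithmic slack in the concentration bound is what is absorbed into $n^{\alpha}$. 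The derivative bookkeeping explains the stated dependence in $\xi$: the inversion of $\mathsf{L}_{V}$ and the Stein smoothing consume roughly four derivatives of $\xi$, while $\Gamma[F_{n}, F_{n}]$, being bilinear in $f' \sim \xi'$, contributes the squared $\mathscr{C}^{2}$-norm. The main technical difficulty is to carry out this bookkeeping while keeping the edge contributions of $Z_{n}$ under the sharp $n^{-1+\alpha}$ threshold, since these edge terms are the ones that ultimately dictate the regularity of $\xi$ required by the theorem.
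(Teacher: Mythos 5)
Your overall architecture matches the paper's: write $X - m = \mathsf{L}F_{n} + Z_{n}$ by inverting a master operator (the paper's $\Theta_{V}$, your $\mathsf{L}_{V}$) built from the Hilbert transform against $\mu_{V}$, then feed this decomposition into an abstract Stein bound (the paper packages yours in \cref{th:stein-approximation} and \cref{th:stein-kernel-L}, proved by exactly the Nourdin--Peccati-type integration by parts you describe), and finally control $\Gamma[F_{n}, X] - C$ and $Z_{n}$ using rigidity-based concentration. The localization of particles to a neighborhood $U$ of $[-1,1]$ via rigidity, the invocation of a BLS-type inversion lemma for the master operator, and the derivative bookkeeping you sketch all align with the paper's argument.

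However, your description of the remainder $Z_{n}$ has a gap that would sink the proof. You list its components as edge corrections, $1/n$ bulk terms, and ``a linear statistic of lower order whose concentration follows from the standard variance bound.'' But the master equation produces a genuinely \emph{quadratic} remainder, namely $\langle (T_{n} - T_{V})f', \bar{\nu}_{n}\rangle = n\iint\int_{0}^{1} f''((1-u)x+uy)\,\mathtt{d}u\,(\mu_{n}-\mu_{V})(\mathtt{d}x)(\mu_{n}-\mu_{V})(\mathtt{d}y)$, since the empirical operator $T_{n}$ itself depends on $\mu_{n}$. This is bilinear in $\bar{\mu}_{n}$ with an extra explicit factor of $n$, so applying the standard $O(n^{-1+\alpha})$ linear-statistics concentration naively yields only $o(n)$, not $o(1)$: you need the two factors of $\bar{\mu}_{n}$ to decouple. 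The paper resolves this by Fourier inversion --- writing $f'' = \int \widehat{f''}(t)\mathrm{e}^{\mathrm{i}t\cdot}$ so the double integral factorizes into a product of two \emph{linear} statistics against the exponential test functions $\mathrm{e}^{\mathrm{i}tu\,\bullet}$ and $\mathrm{e}^{\mathrm{i}t(1-u)\,\bullet}$, each controlled by the quantitative LLN at rate $n^{-1+\alpha}$, which after the $\widehat{f''}$-weighted $t$-integration yields the $n^{-1+2\alpha}$ bound and accounts for the $\mathscr{C}^{6}$ regularity requirement on $\xi$. Without this decoupling device (or a substitute such as a loop-equation-based bound on bilinear statistics), your ``Main obstacle'' step does not close, and the rest of your argument, while structurally correct, cannot deliver the stated rate.
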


Assuming more regularity on the test functions, we obtain an optimal rate of convergence. 
\begin{theorem}\label{th:normal-approximation-optimal}
For $1 \leq p < \infty$ and $\alpha > 0$, there exists $K_{p,\alpha} > 0$ such that
for all test functions  $\xi_{1}, \dots, \xi_{d} \in \mathscr{C}^{{14}}(\mathbb{R})$ satisfying \cref{eq:free}, we have, with $N$ a standard Gaussian in $\mathbb R^d$
  \begin{equation*}
    \mathbf{W}_{p}(X, C^{1/2}N + m) \leq K_{p} A_{\beta} \frac{\norm{\xi}_{\mathscr{C}^{14}(\mathbb{R})} + \norm{\xi}_{\mathscr{C}^{7}(\mathbb{R})}^{2}}{n} .
  \end{equation*}
  In the univariate case $d=1$, with $\sigma:=\sqrt{c_{11}}$ and $m=m_1$, we also have
  \begin{equation*}
    \mathbf{TV}(X, \sigma N + m) \leq K_{1} a_{\beta} \frac{\norm{\xi}_{\mathscr{C}^{14}(\mathbb{R})} + \norm{\xi}_{\mathscr{C}^{7}(\mathbb{R})}^{2}}{n}.
  \end{equation*}
\end{theorem}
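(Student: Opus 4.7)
The strategy is to push the approximate Stein identity underpinning \cref{th:normal-approximation} one step further in the asymptotic expansion in powers of $1/n$, at the cost of requiring $\xi \in \mathscr{C}^{14}$ rather than $\mathscr{C}^{6}$. The guiding principle, already announced in the abstract, is to bypass the exact inversion of the Dyson generator $\mathsf{L}$: one only produces a decomposition $X_k - m_k = \mathsf{L} F_{n,k} + Z_{n,k}$ in which $Z_{n,k}$ is small in $L^{p}$ and the carré du champ $\Gamma[X_j, F_{n,k}]$ concentrates at the covariance $c_{jk}$, and then closes the loop via Stein's identity.

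\textbf{Step 1 (second-order Stein correction).} I look for $F_{n,k} = F^{(0)}_{n,k} + \frac{1}{n} F^{(1)}_{n,k}$ where $F^{(0)}_{n,k} = \sum_{i} \Phi^{(0)}_{k}(\lambda_{i})$ is the linear statistic already used for \cref{th:normal-approximation}, the auxiliary function $\Phi^{(0)}_{k}$ being obtained by inverting the explicit action of $\mathsf{L}$ on linear statistics. This inversion amounts to solving a first-order transport equation on $\Sigma_{V} = [-1,1]$ whose coefficients are dictated by $V'$ and the density $S$, and it consumes a fixed number of derivatives of $\xi_{k}$. The $\mathscr{C}^{6}$ argument halts there, leaving a residual $Z^{(0)}_{n,k}$ of size $n^{-1+\alpha}$ in $L^{p}$. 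To reach the sharp rate $1/n$, I iterate the construction: $F^{(1)}_{n,k}$ is chosen so that $\mathsf{L} F^{(1)}_{n,k}$ cancels the leading part of $n\,Z^{(0)}_{n,k}$, leaving a new remainder of order $1/n^{2}$. Each inversion eats further derivatives of $\xi$, and the bilinear corrections needed to handle the two-particle interaction terms produced by $\mathsf{L}$ account for the quadratic dependence $\norm{\xi}_{\mathscr{C}^{7}}^{2}$ in the bound.

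\textbf{Step 2 (quantitative LLN on the carré du champ and Stein's loop).} Integration by parts against $\mathsf{L}$ yields, for smooth $h : \mathbb{R}^{d} \to \mathbb{R}$,
\begin{equation*}
  \esp\bigl[h(X)(X_{k} - m_{k})\bigr] = \sum_{j=1}^{d} \esp\bigl[\partial_{j} h(X)\,\Gamma[X_{j}, F_{n,k}]\bigr] + \esp\bigl[h(X)\,Z_{n,k}\bigr].
\end{equation*}
A direct computation identifies $\Gamma[X_{j}, F_{n,k}]$, modulo $O(1/n)$ corrections, as a linear statistic of a $\mathscr{C}^{13}$ test function with expectation $c_{jk}$. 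The concentration estimate for smooth linear statistics already implicit in the proof of \cref{th:normal-approximation} upgrades to
\begin{equation*}
  \esp\bigl|\Gamma[X_{j}, F_{n,k}] - c_{jk}\bigr|^{2} \leq K\, \frac{\norm{\xi}_{\mathscr{C}^{14}}^{2} + \norm{\xi}_{\mathscr{C}^{7}}^{4}}{n^{2}}.
\end{equation*}
Plugging this into Stein's identity with $h$ the solution of the multivariate Stein PDE associated to $C^{1/2}N + m$, and invoking Cauchy--Schwarz together with the classical bounds on $\nabla^{2} h$, yields the $\mathbf{W}_{p}$ estimate with the advertised prefactor $A_{\beta}$. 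For the univariate total variation bound I take $h$ to be the Stein solution associated to an indicator $\mathbf{1}_{B}$, which is only Lipschitz; one additional integration by parts, allowed by the extra smoothness of $\xi$ encoded in $\mathscr{C}^{14}$, absorbs the missing $L^{\infty}$ bound on $h''$ and produces the prefactor $a_{\beta}$.

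\textbf{Main obstacle.} The delicate step is Step~1: producing an explicit $F^{(1)}_{n,k}$ together with a sharp $L^{p}$ control on $Z_{n,k}$, without spurious logarithmic losses and with the claimed polynomial dependence on $\norm{\xi}_{\mathscr{C}^{14}}$. The hard point is the boundary behaviour at $\pm 1$ when inverting the transport equation that defines $\Phi^{(1)}_{k}$: these boundary contributions must not degrade the bulk rate $1/n$, which is precisely where the positivity of $S$ on $\Sigma_{V}$ granted by the regularity part of \cref{ass:single-cut} becomes essential, since it ensures the inverse coefficients remain bounded up to the edge.
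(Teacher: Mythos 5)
The paper's proof is a short bootstrap: it keeps the \emph{same} decomposition $X = m + \frac{1}{n}\mathsf{L}F + Z$ used for \cref{th:normal-approximation}, and simply replaces the quantitative law of large numbers of \cref{th:llg-quantitative} (rate $n^{-1+\alpha}$, valid for Lipschitz $f$) by the better estimate $\norm{\pairing{\xi}{\bar{\nu}_n}}_{L^p} = O(1)$ for $\xi \in \mathscr{C}^6$, which is itself a consequence of the $\mathbf{W}_p$ bound just proved in \cref{th:normal-approximation}. Feeding this improved law of large numbers into the Fourier-splitting bound \cref{eq:remainder-bulk-Lp} and the covariance bound \cref{eq:bound-approximation-covariance} produces a $t^{12}$ weight in the Fourier integral (hence the requirement $\xi \in \mathscr{C}^{14}$ via \cref{th:inverse-theta}) and the $\norm{\xi}_{\mathscr{C}^7}^2$ term. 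No new $F$ is ever constructed.

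Your Step~1 proposes something genuinely different: a second-order Stein ansatz $F = F^{(0)} + n^{-1} F^{(1)}$ with $F^{(1)}$ chosen so that $\mathsf{L} F^{(1)}$ cancels the leading part of $n Z^{(0)}$. This is where there is a real gap. The dominant part of $Z^{(0)}$ is the bilinear statistic $\frac{1}{2}\pairing{(T_n - T_V)f'}{\bar{\nu}_n} = \frac{n}{2}\iint \int_0^1 f''((1-u)x+uy)\,\mathtt{d}u\,\bar\mu_n(\mathtt{d}x)\bar\mu_n(\mathtt{d}y)$, not a linear statistic. The paper's master equation (\cref{th:master}) and the invertibility of $\Theta_V$ (\cref{th:inverse-theta}) only tell you how $\mathsf{L}$ acts on \emph{linear} statistics and how to invert it there; they say nothing about producing a preimage under $\mathsf{L}$ of a quadratic statistic. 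To make your construction work you would need an analogue of the master equation for $F^{(1)} = \sum_{i\ne j}\varphi(\lambda_i,\lambda_j)$, and you would then have to contend with the cubic statistics that $\mathsf{L}$ generates from such $F^{(1)}$; you give no argument that this expansion closes or that the resulting inversion (of a bilinear master operator) is well-posed up to the edge. The "boundary of the transport equation" worry in your final paragraph is symptomatic of this: the object being inverted is the singular integral operator $\Theta_V$, not a first-order transport equation, and the actual inversion is borrowed wholesale from \cite{BLS}; the hard point the paper has to face is not edge behaviour but the quadratic nature of the remainder, which it handles by Fourier splitting rather than by inversion.

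Your Step~2 then asserts the variance bound $\esp|\Gamma[X_j,F_{n,k}] - c_{jk}|^2 \lesssim n^{-2}(\norm{\xi}_{\mathscr{C}^{14}}^2 + \norm{\xi}_{\mathscr{C}^7}^4)$ as if it followed from "the concentration estimate already implicit in the proof of \cref{th:normal-approximation}". But that estimate, as stated there, only gives rate $n^{-1+\alpha}$; upgrading it to $n^{-1}$ is exactly the bootstrap step that constitutes the entire content of the paper's proof, and you do not carry it out. If you replace your Step~1 by this bootstrap (apply \cref{th:normal-approximation} with $\alpha=1/2$ to deduce $\norm{\pairing{g}{\bar{\nu}_n}}_{L^p} = O(1)$ for $g\in\mathscr{C}^6$, and re-insert this into the Fourier splitting and the covariance estimate), your Step~2 then does become correct, and matches the paper. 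As written, however, Step~1 is not an argument that works, and Step~2 begs the question it is supposed to answer.
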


We also prove a theorem for functions of lower regularity.
For $\gamma \in (0,1)$, we write $\xi \in \mathscr{C}^{1,\gamma}(\mathbb{R})$, provided $\xi \in \mathscr{C}^{1}(\mathbb{R})$ and it satisfies
\begin{equation*}
  \norm{\xi}_{\mathscr{C}^{1,\gamma}(\mathbb{R})} := \norm{\xi}_{\mathscr{C}^{1}(\mathbb{R})} + \sup_{x \ne y} \frac{\abs{\xi'(x) - \xi'(y)}}{\abs{x - y}^{\gamma}} < \infty.
\end{equation*}

\begin{theorem}\label{th:normal-approximation-lipschitz}
Let $\xi \in \mathscr{C}^{1,\gamma}(\mathbb{R})$ for some $\gamma \in (0,1)$. Setting $\sigma:=\sqrt{c_{11}}$ and $m=m_1$, and for any $a<\frac{\gamma}{6+\gamma}$, we have
\begin{equation*}
  \mathbf{TV}(X, \sigma N + m) \leq K_{a} a_{\beta} \frac{\norm{\xi}_{\mathscr{C}^{1,\gamma}(\mathbb{R})}}{n^{a}}.
  \end{equation*}
\end{theorem}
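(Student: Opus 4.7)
The plan is to reduce to \cref{th:normal-approximation} by mollifying the test function $\xi$. Fix $\epsilon>0$ and set $\xi_\epsilon := \xi * \phi_\epsilon$, where $\phi_\epsilon(x) := \epsilon^{-1}\phi(x/\epsilon)$ is a standard smooth mollifier of width $\epsilon$. Since $\xi' \in \mathscr{C}^{0,\gamma}(\mathbb{R})$, classical mollification estimates give $\norm{\xi - \xi_\epsilon}_\infty \lesssim \norm{\xi}_{\mathscr{C}^{1,\gamma}(\mathbb{R})}\, \epsilon^{1+\gamma}$ and $\norm{\xi_\epsilon^{(k)}}_\infty \lesssim \norm{\xi}_{\mathscr{C}^{1,\gamma}(\mathbb{R})}\, \epsilon^{-(k-1-\gamma)}$ for $k \geq 2$ (the Hölder regularity of $\xi'$ producing the extra factor $\epsilon^\gamma$). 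In particular $\norm{\xi_\epsilon}_{\mathscr{C}^6(\mathbb{R})} \lesssim \epsilon^{-6}$. Denoting by $X^\epsilon, \sigma_\epsilon, m_\epsilon$ the objects of \cref{th:normal-approximation} associated with $\xi_\epsilon$, I would apply that theorem to $\xi_\epsilon$ to obtain, for every $\alpha > 0$,
\begin{equation*}
  \mathbf{TV}(X^\epsilon, \sigma_\epsilon N + m_\epsilon) \lesssim_\alpha \epsilon^{-6}\, n^{-(1-\alpha)}.
\end{equation*}

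The crux of the proof is the mollification bridge $\mathbf{TV}(X, X^\epsilon) \lesssim \epsilon^\gamma$. The random variable $X - X^\epsilon$ is, up to a deterministic shift, the centered linear statistic associated with $\xi - \xi_\epsilon$, whose derivative is uniformly bounded by $C\epsilon^\gamma$. Plugging $\xi - \xi_\epsilon$ into the quadratic form $c_{ij}$ and combining with the variance convergence underlying \cref{th:normal-approximation} gives $\var(X - X^\epsilon) \lesssim \epsilon^{2\gamma}$ uniformly in $n$, whence $\esp \abs{X - X^\epsilon} \lesssim \epsilon^\gamma$. To promote this $L^1$ proximity to a $\mathbf{TV}$ proximity without losing a square root, the plan is to convolve both $X$ and $X^\epsilon$ by an independent centered Gaussian of variance $\delta^2$: the middle term satisfies $\mathbf{TV}(X + \delta G, X^\epsilon + \delta G) \lesssim \delta^{-1} \esp\abs{X-X^\epsilon} \lesssim \delta^{-1}\epsilon^\gamma$, while both smoothing errors are controlled by $\delta$ times an $L^\infty$-bound on the density of $X^\epsilon$ (inherited from the smoother CLT of the previous step); optimizing in $\delta$ delivers $\mathbf{TV}(X, X^\epsilon) \lesssim \epsilon^\gamma$. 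A direct elementary computation yields also $\mathbf{TV}(\sigma N + m, \sigma_\epsilon N + m_\epsilon) \lesssim \epsilon^\gamma$.

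Combining the three estimates via the triangle inequality gives
\begin{equation*}
  \mathbf{TV}(X, \sigma N + m) \lesssim \epsilon^\gamma + \epsilon^{-6}\, n^{-(1-\alpha)},
\end{equation*}
and the balancing choice $\epsilon = n^{-(1-\alpha)/(6+\gamma)}$ produces the rate $n^{-\gamma(1-\alpha)/(6+\gamma)}$; letting $\alpha \to 0$ recovers the rate $n^{-a}$ for any $a < \gamma/(6+\gamma)$, as claimed. The main obstacle is the mollification bridge: a naive Cauchy--Schwarz $L^1$-to-$\mathbf{TV}$ conversion would cost a square root and yield only the weaker exponent $\gamma/(2(6+\gamma))$, so the sharp bridge really requires exploiting the density regularity of $X^\epsilon$ furnished by the smoother CLT, which is precisely what the Gaussian convolution trick described above accomplishes.
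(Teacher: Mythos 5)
Your mollification strategy matches the paper's, and the final balancing calculation recovers the correct exponent $a < \gamma/(6+\gamma)$, but the central bridging step $\mathbf{TV}(X, X^{\varepsilon}) \lesssim \varepsilon^{\gamma}$ has a genuine gap. After Gaussian smoothing, the triangle inequality leaves you with the two terms $\mathbf{TV}(X, X + \delta G)$ and $\mathbf{TV}(X^{\varepsilon}, X^{\varepsilon} + \delta G)$. You claim both are controlled by ``an $L^{\infty}$-bound on the density of $X^{\varepsilon}$,'' but this can only help the second term; $\mathbf{TV}(X, X + \delta G)$ requires regularity of the law of $X$ itself, and controlling that is essentially the statement you are trying to prove (it is circular: a uniform density bound for $X$ would already imply a TV-CLT for $X$ via $\mathbf{W}_{1}$). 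The smoother CLT for $X^{\varepsilon}$ tells you nothing about the density of $X$. A smaller issue: the uniform-in-$n$ variance bound $\esp|X - X^{\varepsilon}| \lesssim \varepsilon^{\gamma}$ is asserted via ``the variance convergence underlying Theorem~\ref{th:normal-approximation},'' but $\xi - \xi_{\varepsilon}$ is not smooth enough to feed into that theorem; the tool actually available, Lemma~\ref{th:llg-quantitative}, gives $\norm{\pairing{\xi - \xi_{\varepsilon}}{\bar{\nu}_{n}}}_{L^{1}} \lesssim \varepsilon^{\gamma} n^{\alpha}$, with a polynomial loss (harmless after balancing, but not free).

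The paper's proof avoids any $L^{1}$-to-TV conversion entirely, and this is precisely the point of Theorem~\ref{th:stein-approximation}. Instead of comparing $X$ and $X^{\varepsilon}$ by a triangle inequality in TV, one solves the master equation for $\xi_{\varepsilon}$ (so $F_{\varepsilon}$ is the corresponding potential with $\Theta_{V} f_{\varepsilon}' = \xi_{\varepsilon}$ on $U$) and then writes the \emph{original} statistic as
$\pairing{\xi}{\bar{\nu}_{n}} = m_{\varepsilon} + \tfrac{1}{n}\mathsf{L} F_{\varepsilon} + Z_{\varepsilon}$,
where $Z_{\varepsilon}$ collects the usual remainders \emph{plus} the new term $\pairing{\xi - \xi_{\varepsilon}}{\bar{\nu}_{n}}$. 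The TV bound \cref{eq:stein-bound-tv} controls $\mathbf{TV}(X, \sigma_{\varepsilon} N + m_{\varepsilon})$ directly by $\norm{Z_{\varepsilon}}_{L^{1}}$ (through the pairing $\Esp{\varphi(X) Z_{\varepsilon}}$ with $\norm{\varphi}_{\infty}$ bounded), so the $L^{1}$-small approximation error feeds straight into the TV estimate with no smoothing trick and no density regularity for $X$. The remaining comparison $\mathbf{TV}(\sigma_{\varepsilon} N + m_{\varepsilon}, \sigma N + m) \lesssim \varepsilon^{\gamma}$ is between two Gaussians and is elementary, as you correctly note. If you want to salvage your route, the fix is exactly this: do not pass through $\mathbf{TV}(X, X^{\varepsilon})$; rather, insert $\pairing{\xi - \xi_{\varepsilon}}{\bar{\nu}_{n}}$ into the remainder of the Stein decomposition for the non-smooth statistic, and let the $\norm{Z}_{L^{1}}$ term of Theorem~\ref{th:stein-approximation} do the work.
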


\subsubsection{Super-convergence}\label{sec.super.state}
We now turn to our qualitative results and the reinforcement of the mode of convergence is the smooth case. 
We say that a sequence $(X_{n})$ \emph{super-converges} to a non-degenerate Gaussian $N\sim \mathcal N(m,\sigma^2)$ with density $\varphi_{m,\sigma^2}$ provided that,
for all $r \in \mathbb{N}$, there exists $n_0 \in \mathbb{N}$ such that for $n \geq n_0$, the law of $X_{n}$ admits a density $\varphi_{n} \in \mathscr{C}^{r}(\mathbb{R})$, and

\begin{equation*}
\limsup_{n \to \infty}  \norm{ \varphi_{n} - \varphi_{m,\sigma^2}}_{\mathscr{C}^{r}(\mathbb{R})} = 0.
\end{equation*}

Regarding linear statistics, we obtain that convergence in law can easily be upgraded to super-convergence.
For $\alpha > 0$, we say that $\xi \in \mathscr{C}^{1}(\mathbb{R})$ is \emph{$\alpha$-regular} provided
that $$\mathsf{Leb}(x \in \mathbb{R}^{d} : |\xi'(x)| \le \epsilon) \lesssim \epsilon^{\alpha}.$$

It is in particular true for any polynomial function $\xi$.

\begin{theorem}\label{th:super-convergence}
Let $\xi \in \mathscr{C}^{\infty}(\mathbb{R})$ be $\alpha$-regular such that $(X_{n})$ converges in law to a Gaussian variable $N \sim \mathcal N(m, \sigma^2)$. Then, $(X_{n})$ super-converges to $N$.
\end{theorem}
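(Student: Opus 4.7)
The plan is to upgrade the convergence in law to convergence of the densities $\varphi_n$ of $X_n$ in $\mathscr{C}^r(\mathbb{R})$ for every $r$, via the Fourier route: iterated integration by parts against the Dyson generator $\mathsf{L}$ will force the characteristic functions $\widehat{\varphi}_n(t) := \mathbf{E}[e^{itX_n}]$ to decay faster than any polynomial at infinity, with constants uniform in $n$. The missing ingredient, already flagged in the abstract, is the uniform existence of negative moments for the carré du champ $\Gamma[X_n, X_n]$, and this is where the $\alpha$-regularity of $\xi$ will enter.

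\textbf{Uniform negative moments of the carré du champ (main obstacle).} A direct computation with the Dyson $\Gamma$ gives
\begin{equation*}
\Gamma[X_n, X_n] = \frac{1}{\beta n} \sum_{i=1}^n |\xi'(\lambda_i)|^2,
\end{equation*}
and the key estimate to prove is $\sup_n \mathbf{E}[\Gamma[X_n, X_n]^{-p}] < \infty$ for every $p \geq 1$. Since $\mu_V$ has a bounded density on $\Sigma_V$ under \cref{ass:single-cut}, the $\alpha$-regularity assumption yields $\mu_V(\{|\xi'| \leq \epsilon\}) \lesssim \epsilon^\alpha$. Standard rigidity/concentration of the empirical measure of a single-cut $\beta$-ensemble around $\mu_V$ then shows that the empirical mass of $\{|\xi'| \leq \epsilon\}$ is itself of order $\epsilon^\alpha$ outside a stretched-exponentially small event. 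Splitting $\tfrac{1}{n}\sum_i |\xi'(\lambda_i)|^2$ according to whether $|\xi'(\lambda_i)| \leq \epsilon$ or not and optimizing $\epsilon$ yields a small-ball estimate of the form $\prob(\Gamma[X_n, X_n] \leq t) \lesssim \exp(-c n^a t^{-b})$ uniformly in $n$, and all negative moments follow by integration of the tail.

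\textbf{Polynomial decay of the characteristic function.} The $\Gamma$-calculus IBP identity $\mathbf{E}[\phi(X_n) \mathsf{L} F] = -\mathbf{E}[\phi'(X_n) \Gamma[X_n, F]]$, applied to $\phi(x) = e^{itx}$ and to the Malliavin-type choice $F = X_n / \Gamma[X_n, X_n]$ (well-defined thanks to the previous step), leads after rearrangement to an identity of the form
\begin{equation*}
it \, \widehat{\varphi}_n(t) = \mathbf{E}\bigl[e^{itX_n} \mathsf{W}_{n,1}\bigr],
\end{equation*}
where $\mathsf{W}_{n,1}$ is a rational expression built from $\mathsf{L} X_n$, $\Gamma[X_n, X_n]$ and $\Gamma[X_n, \Gamma[X_n, X_n]]$. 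Iterating the procedure, one obtains $(it)^k \widehat{\varphi}_n(t) = \mathbf{E}[e^{itX_n} \mathsf{W}_{n,k}]$, where $\mathsf{W}_{n,k}$ is built from iterated applications of $\mathsf{L}$ and $\Gamma$ to $X_n$ together with finitely many negative powers of $\Gamma[X_n, X_n]$. Because $\xi \in \mathscr{C}^\infty(\mathbb{R})$, the positive ingredients are controlled by moments of linear statistics of derivatives of $\xi$, hence bounded in every $L^p$ uniformly in $n$; combined with the previous step via Hölder's inequality, this gives $\sup_n \|\mathsf{W}_{n,k}\|_{L^1} < \infty$ and therefore the uniform polynomial decay $|\widehat{\varphi}_n(t)| \lesssim_k (1+|t|)^{-k}$.

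\textbf{Conclusion.} Fourier inversion then yields, for $n$ large enough, densities satisfying
\begin{equation*}
\varphi_n^{(r)}(x) = \frac{1}{2\pi} \int (it)^r e^{-itx} \widehat{\varphi}_n(t) \mathtt{d} t,
\end{equation*}
whose integrand is dominated in modulus by an integrable function independent of both $n$ and $x$. The pointwise convergence $\widehat{\varphi}_n \to \widehat{\varphi}_{m,\sigma^2}$, which is a reformulation of the hypothesized convergence in law, then upgrades by dominated convergence to
\begin{equation*}
\sup_{x \in \mathbb{R}} \bigl|\varphi_n^{(r)}(x) - \varphi_{m,\sigma^2}^{(r)}(x)\bigr| \leq \frac{1}{2\pi} \int |t|^r \bigl|\widehat{\varphi}_n(t) - \widehat{\varphi}_{m,\sigma^2}(t)\bigr| \mathtt{d} t \xrightarrow[n \to \infty]{} 0,
\end{equation*}
which is the announced super-convergence.
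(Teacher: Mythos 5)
Your overall strategy matches the paper's: establish uniform (in $n$) negative moments of the normalized carré du champ via the $\alpha$-regularity, then run an iterated $\Gamma$-integration-by-parts to force uniform polynomial decay of the characteristic functions, and conclude by Fourier inversion and dominated convergence. The Fourier packaging at the end is equivalent to the paper's route via $\abs{\mathbf{E}[\phi^{(p)}(X_n)]} \le C\norm{\phi}_\infty$. However, there is a genuine gap precisely at the step you flag as the main obstacle, and the fix is not a technicality.

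The concentration/rigidity argument you invoke can only show: outside an event of probability $\lesssim \exp(-n^c)$, the empirical mass of $\set{\abs{\xi'}\le\epsilon}$ is $\lesssim\epsilon^\alpha$, hence $\tfrac{1}{n}\sum_i\xi'(\lambda_i)^2 \ge \tfrac{1}{2}\epsilon^2$ for $\epsilon$ below a fixed threshold. This yields
\begin{equation*}
\prob\paren[\Big]{\tfrac{1}{n}\textstyle\sum_i\xi'(\lambda_i)^2 \le t} \lesssim \exp(-n^c), \qquad t \le t_0,
\end{equation*}
which is uniform in $n$ but does \emph{not} decay as $t\to 0$ at fixed $n$; the stretched-exponential failure probability is $t$-independent, so $\int_0^{t_0} t^{-p-1}\prob(\cdot\le t)\,\mathtt{d}t$ still diverges, and no optimization in $\epsilon$ can help since shrinking $\epsilon$ does not improve the $\exp(-n^c)$ error. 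The small-ball estimate $\exp(-cn^a t^{-b})$ you assert is simply not delivered by rigidity. The paper's \cref{lem.neg} closes this hole by splitting into two regimes and, in the crucial very-small regime $\varepsilon \le e^{-kn}$ that your argument misses, working directly with the explicit Gibbs density: one uses $\prob(\pairing{g}{\mu_n}\le\varepsilon)\le\prob(g(\lambda_i)\le n\varepsilon,\ \forall i)$, applies Cauchy--Schwarz to compare $Z_{n,\beta}$ with $Z_{n,2\beta}$, and invokes $\alpha$-regularity to bound the \emph{Lebesgue} measure (not the $\mu_V$- or $\mu_n$-mass) of $\set{\abs{\xi'}\le\epsilon}$, producing $\prob(\tfrac{1}{n}\sum\xi'(\lambda_i)^2\le\varepsilon)\lesssim\varepsilon^{\gamma n}$. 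You would need this free-energy/partition-function comparison, or an equivalent, to make the negative-moment claim rigorous. A minor separate issue: your formula $\Gamma[X_n,X_n]=\tfrac{1}{\beta n}\sum\abs{\xi'(\lambda_i)}^2$ does not match the paper's convention $\Gamma[F,G]=\nabla F\cdot\nabla G$, under which $\Gamma[X,X]=\sum\xi'(\lambda_i)^2$; the quantity whose negative moments one needs is $\Gamma[X,X]/n$, to balance the $O(n)$ size of $\mathsf{L}X$, and while your prefactor implicitly performs that normalization, the bookkeeping should be made explicit.
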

%
%

\newpage
\subsection{Comparison with existing results}

\subsubsection{Gaussian fluctuations for linear statistics}
The mathematical study of fluctuations of linear statistics around their equilibrium starts with the seminal work \cite{Johansson}.
His method consists in writing the Laplace transform of a linear statistics as the ratio of a partition function associated with a perturbed potential by that associated with the initial potential.
Interestingly, the perturbation involves the so-called master operator, noted $\Theta_{V}$ in the sequel, see section \ref{master-operator-def} for precise definition. This operators plays also a central role in our approach.
The method developed in \cite{Johansson} has been continued in \cite{KriecherbauerShcherbina,Shcherbina,BorotGuionnet,BorotGuionnetMulti} to obtain qualitative CLT with increasing levels of generality, allowing, for instance, results in the multi-cut regime.
All the above results require the potential $V$ to be analytic.
Using the same approach on the Laplace transform, \cite{BLS} proves a CLT when $V \in \mathscr{C}^{5}(\mathbb{R})$ and $\xi \in \mathscr{C}^{3}_{c}(\mathbb{R})$.
To the best of our knowledge, these are the best results available regarding the regularity of $V$ and $\xi$.
In contrast, we obtain a CLT for $V \in \mathscr{C}^{7}(\mathbb{R})$ but allowing to lower the regularity to $\xi \in \mathscr{C}^{1,\gamma}(\mathbb{R})$.
Our results are also limited to the single-cut case.
We stress that in the multi-cut regime, non-Gaussian fluctuations are known, and additional compatibility conditions on $\xi$ are required to ensure a CLT.
We also mention \cite{BekermanLodhia} that derives a CLT for linear statistics at all meso-scales, that is for linear statistics where the particles are rescaled by a factor $n^{-\alpha}$ for $\alpha \in (0,1)$.
Even if our main result allows to recover CLT at some meso-scale, we cannot reach their full range with our method.

\subsubsection{Stein's method and \texorpdfstring{$\beta$}{β}-ensemble}
To the best of our knowledge, the reference \cite{LLW} is the first to implement Stein's method in the setting of \textit{general} $\beta$-ensembles and to provide quantitative bounds in $2-$Wasserstein distance which are near optimal for $V(x)=x^2$ but sub-optimal for more general potentials. Their method works for linear statistics that are close to be eigenvalues of the Dyson generator, and technically completing their requires, among others, the \textit{tour de force} of diagonalizing the so-called master operator. Relying on the concept of exchangeable pairs which plays an important role in Stein's method (see e.g. \cite{Meckes}), quantitative CLTs for linear statistics of Haar distributed random variables on compact classical groups are established in \cite{MR3245992,MR2861678} and later on, in the context of \textit{circular} $\beta$-ensembles, in \cite{MR3485367} where exchangeable pairs are built through the $n$-dimensional circular Dyson Brownian motion. Finally, the contribution \cite{HG21} which builds upon the techniques given in \cite{LLW} to provide quantitative (near optimal) CLTs in the high temperature regime.

\par
\medskip

Let us also mention the seminal article \cite{Chatterjee}.
There, the author introduced a variation around Stein's method, based on \emph{second-order Poincaré inequality} in order to study fluctuations of eigenvalues of matrices with random coefficients, possibly not identically distributed, whose distributions admit suitable densities.
It is somewhat complicated to compare the results there with ours, simply because for general potentials $V$ we cannot a priori interpret $\beta$-ensembles as spectrum of random matrices with independent coefficients. Besides, \cite[Thm. 4.2]{Chatterjee} obtains convergence to a Gaussian of random variables of the form $\tr A_{n}^{p_{n}}$ where $(A_{n})$ are some random matrices as above, and $p_{n} = o(\log n)$. Note that, in comparison, in our context, it should be possible to get CLTs for $\xi_n(x)=x^{p_n}$ when $p_n$ grows polynomially although the models are pretty different and only coincide for $V(x)=x^2$.

\par
\medskip

We would like to point out that despite the aforementioned articles and ours being inspired by the philosophy of Stein's method, they strongly differ in the way Stein's method is implemented. Indeed, \cite{LLW} relies on ideas coined in \cite{Meckes} using exchangeable pairs while our builds on a novel refinement of the well established \textit{Malliavin Stein's} approach, which allows to handle random variables which \textbf{are not} in the image of the underlying Markov generator. Indeed, whenever $X$ belongs to the image of $\mathsf{L}$ one can directly provide a \textit{Stein's kernel} by setting $\tau(x)=\mathbb{E}\left[\Gamma\left[X,-L^{-1}X\right]\right]$ which fulfils the equation $\mathbb{E}\left[\phi'(X)\tau(X)-X\phi(X)\right]=0$. The great advantage of this point of view is to control the total variation distance (among others) by $\mathbb{E}\left[|1-\tau(X)|\right]$ but it requires the costly assumption that $\mathsf{L}$ is invertible which is not true in general. To overcome this obstacle, we notice that it is enough to show that $X$ is near $\text{Im}(Z)$ in the following sense: $X=LF+Z$ for $Z$ small in probability. This simple remark, which is new to the best of our knowledge, increases considerably the applicability of the above Malliavin-Stein's approach and still enables to provide total variation bounds but is a priori not sufficient to build a Stein's kernel. While this being noticed, our strategy then consists in proving that any linear statistics is close to the range of $\mathsf{L}$, a step which proceeds from the invertibility of the master operator. As a result, our findings thus refine those of \cite{LLW} by providing optimal rates of convergence for stronger metrics such as the total variation distance and we believe that our approach could also be successfully implemented in the framework of circular $\beta$-ensembles and Haar distributed matrices on compact groups.

\subsubsection{Super-convergence}
To the best of our knowledge, in the context of random matrices or $\beta$-ensembles, the question of establishing convergence in metrics stronger than total variation or Wasserstein distances has not been considered yet. We deploy here ideas that are classical in the framework of Malliavin calculus and stochastic analysis and which merely consists in establishing negative moments for the \textit{carré-du-champ} operator applied to the considered linear statistics.  Then, relying on integration by parts techniques, we are able to prove strong forms of convergence for the densities of the underlying sequences of random variables.

\subsection{Outline of the proofs an plan of the paper}

Let us give here more details on our strategy of proof and on the plan of the paper. 

\subsubsection{The generator of the Dyson Brownian motion}
The overall strategy behind our quantitative and qualitative estimates leverage the characterization of the $\beta$-ensemble as the unique invariant distribution of the Dyson Brownian motion.
Namely, consider the \emph{generator of the Dyson Brownian motion}
\begin{equation}\label{eq:generator}
  \mathsf{L} = \mathsf{L}_{n, \beta, V}=:= \Delta - \beta \nabla H \cdot \nabla = \sum_{i=1}^{n} \partial_{\lambda_{i}}^{2} - \beta n \sum_{i=1}^{n} V'(\lambda_{i}) \partial_{\lambda_{i}} + \frac{\beta}{2n} \sum_{i \ne j} \frac{\partial_{\lambda_{i}} - \partial_{\lambda_{j}}}{\lambda_{i} - \lambda_{j}}.
\end{equation}
The operator $\mathsf{L}$ is the diffusive Markov generator canonically associated with the $\beta$-ensemble. We have indeed the following characterization of the $\beta$-ensemble Gibbs measure $\prob$:
\begin{equation*}
\tilde{\prob} = \prob \Leftrightarrow  \paren*{ \tilde{\esp} \mathsf{L} F = 0, \ \forall F \in \mathscr{C}^{\infty}_{c}(\mathbb{R}^n) } \Leftrightarrow \paren*{ \tilde{\esp} \bracket*{ F \mathsf{L} G} = \tilde{\esp}\bracket{G \mathsf{L} F}, \ \forall F,G \in \mathscr{C}^{\infty}_{c}(\mathbb{R}^n) }.
\end{equation*}
The differential structure induced by $\mathsf{L}$ on $\mathbb{R}^{n}$, technically called a \emph{Dirichlet structure}, comes with a \emph{carré du champ} operator
\begin{equation*}
\Gamma[F,G] := \nabla F \cdot \nabla G, \qquad F,\,G \in \mathscr{C}^{1}(\mathbb{R}^n).
\end{equation*}
We have the following integration by parts formula
\begin{equation}\label{eq:ipp}
  \Esp*{ \Gamma[F,G] } = -\Esp*{ F \mathsf{L} G}, \qquad F,\, G \in \mathscr{C}^{\infty}_{c}(\mathbb{R}^n).
\end{equation}

\subsubsection{\texorpdfstring{$\Gamma$}{Γ}-Stein's method for $\beta$-ensemble}
At an informal level, the data of a diffusive Markov generator generally combines well with Stein's method to provide quantitative bounds for normal approximation.
Provided $\ker \mathsf{L}$ is limited to constant functions, $\mathsf{L}$ is invertible on mean-zero functions, and the celebrated \emph{$\Gamma$-Stein} approach \cite{NourdinPeccati,PeccatiReitzner,ACP,LNP}, or \emph{Malliavin--Stein} approach in the setting of Gaussian fields or Poisson point processes, yields that the variance of $\Gamma[X, - \mathsf{L}^{-1}X]$ controls the Gaussian fluctuations of $X$, in total variation or Wasserstein distance.
However, in the case $\beta$-ensembles, the operator $\mathsf{L}$ is in general \emph{not invertible}.

To overcome this difficulty, we amend the classical $\Gamma$-Stein approach.
Intuitively, whenever $X = \mathsf{L}F$ for some $F$, $\mathsf{L}^{-1}X$ makes sense despite the non-invertibility of $\mathsf{L}$.
In this case, it is natural to expect that $\Gamma[X, -\mathsf{L}^{-1}X] = \Gamma[X, -F]$ controls the Gaussian fluctuations of $X$.
In the next Section \ref{sec.stein} and more precisely in \cref{th:stein-approximation} below, we formalize this intuition by proving quantitative bounds for the normal approximation, in Wasserstein distance and total variation, of random variables of the form $X = \mathsf{L} F + Z$.
Provided, $Z$ is small and that $\Gamma[X,-F]$ is close to a constant $\sigma^{2}$, then $X$ is close to a Gaussian with variance $\sigma^{2}$.
This theorem should not come as a surprise to Stein's method aficionados and its proof is rather straightforward.
However, this observation has, to the best of our knowledge, never been remarked and this ``almost invertibility'' decomposition is precisely what allows us to provide an efficient proof.
We stress that our bounds hold for generic abstract diffusive Markov generators and we believe it could prove useful in other statistical physics models where the Gibbs measure has an explicit density.

\subsubsection{The master equation} 
The next Section \ref{s:master-equation} of the paper consists in checking that the Dyson generator is indeed in line with the global strategy developed in Section \ref{sec.stein}. 
In order to apply our abstract bound, we need to show that our linear statistics $X$ is of the form $\mathsf{L} F + Z$. With the notations of Section \ref{s:master-equation}, using mostly algebraic manipulations together with the minimality of $\mu_{V}$ with respect to the mean-field energy $\mathcal{I}_{V}$, we show, in \cref{th:master}, that whenever $F := \sum_{i=1}^{n} f(\lambda_{i})$ is a linear statistic, setting $m_f:=\paren*{ \frac{1}{2} - \frac{1}{\beta} } \pairing{f''}{\mu_{V}}$ which is such that the term in parenthesis below is asymptotically centered, we have
\begin{equation*}
  \frac{1}{n \beta} \mathsf{L} F = \left( \sum_{i=1}^{n} (\Theta_{V}f')(\lambda_{i}) - n \int (\Theta_{V} f') \mathtt{d} \mu_{V} -m_f \right)+ \frac{Z}{\beta},
\end{equation*}
where $Z$ is a quadratic (which should be thought as a remainder) term and $\Theta_{V}$ is the so-called  \emph{master operator}.
In particular, under \cref{ass:single-cut}, $\Theta_{V}$ is invertible in a neighborhood of $[-1,1]$.
Namely, there exists an open neighborhood $U$ of $[-1,1]$, such that given $\xi \in \mathscr{C}^{6}(\mathbb{R})$, we can find $\psi \in \mathscr{C}^{5}_{c}(\mathbb{R})$ and $c_{\xi} \in \mathbb{R}$ satisfying
\begin{equation*}
  (\Theta_{V}\psi)(x) = \xi(x) + c_{\xi}, \qquad x \in U.
\end{equation*}

Under \cref{ass:single-cut}, by \cite{BEY} the $(\lambda_{i})$ enjoy a strong rigidity property, precisely recalled in \cref{th:rigidity}.
Informally, the probability that the $\lambda_{i}$'s deviate from the grid given by the $i/n$-th quantiles of $\mu_{V}$ is overwhelmingly small. Note that this rigidity phenomenon is also used in \cite{LLW}.
Thus up to paying a loss factor, that is negligible compared to our bound, we can assume that all the $\lambda_{i}$'s are localized in the neighborhood $U$.
By the master equation, choosing $f$ any primitive of $\psi$, we are thus left with establishing the quantitative CLT for the random variable
\begin{equation*}
  X = \frac{1}{n\beta} \mathsf{L} F -\frac{Z}{\beta},
\end{equation*}
which falls precisely under the scope of our Stein's bound.

\subsubsection{Controlling the carré du champ and the remainder} In Section \ref{sec.proofs}, we concretely implement our strategy and complete the proofs of our main results. We first establish the quantitative statements, namely \cref{th:normal-approximation}, \cref{th:normal-approximation-optimal} and \cref{th:normal-approximation-lipschitz}.
In view of our strategy, we show that $(n\beta)^{-1} \Gamma[X,-F]$ is close to a constant and that $(\beta)^{-1}Z$ is small.
We control the term involving the carré du champ by relying on a simple yet remarkable property of the carré du champ: it preserves linear statistics.
Precisely, we have that
\begin{equation*}
  \Gamma\bracket*{\sum \varphi(\lambda_{i}), \sum \chi(\lambda_{i})} = \sum \varphi'(\lambda_{i}) \chi'(\lambda_{i}).
\end{equation*}
It follows that
\begin{equation*}
  \frac{1}{n} \Gamma[X, F] = \frac{1}{n} \sum \xi'(\lambda_{i}) f'(\lambda_{i}).
\end{equation*}
By the convergence to equilibrium, the above quantity converges to $\int \xi' f' \mathtt{d} \mu_{V}$.
Using the minimality property of $\mu_{V}$, it can be shown that this last integral coincides with limiting covariance given above.
To derive quantitative bounds from there, we rely on a quantitative convergence to equilibrium which also follows from the rigidity estimate from \cite{BEY}.

Handling the quadratic remainder $Z$ is more tedious.
We use Fourier inversion to decompose $Z$ into a product of linear statistics.
By the aforementioned quantitative law, one of term goes to $0$ at a given rate while the other stays bounded.
This allows us to conclude for the proof \cref{th:normal-approximation}.
From there, \cref{th:normal-approximation-optimal} follows by a bootstrap argument: we essentially redo the same proof but instead of using the sub-optimal quantitative law of large numbers provided by \cite{BEY}, we use the CLT we just established that gives us a law of large numbers at speed $1/n$.
To conclude for \cref{th:normal-approximation-lipschitz}, we approximate $\xi \in \mathscr{C}^{1,\gamma}(\mathbb{R})$ by a sequence of smooth functional $\xi_{\varepsilon}$ while choosing $\varepsilon \simeq n^{a}$, for a well-chosen $a$.

\subsubsection{Super-convergence}The proof of our qualitative result Theorem \ref{th:super-convergence} is finally given in Section \ref{sec.super1}.
The derivation of the super-convergence relies on a lemma, classical in Dirichlet forms / Malliavin calculus theory, stating that negative moments of $\Gamma[X,X]$ control the Sobolev norms of the density of $X$.
We rely again on the fact that $\Gamma$ preserves linear statistics.
In this precise case,
\begin{equation*}
  \Gamma{[X,X]} = \sum (\xi'(\lambda_{i}))^{2}.
\end{equation*}
We obtain negative moments through a direct control on $\Prob*{ \sum (\xi'(\lambda_{i}))^{2} \leq \varepsilon }$.

\section{Stein's method for Markov diffusive operators}\label{sec.stein}

In order to carry out our program, we need to establish quantitative bounds in total variation for random variables which are close in some sense to the range of a given Markov diffusive operator $\mathsf{L}$. To the best of our knowledge, the following estimates seem to be new in the well-studied area of Malliavin--Stein's method and are of independent interest. We stress that all the results of this section are valid for any diffusive Markov operator $\mathsf{L}$ associated with $\Gamma$ the so-called carré du champ and $\prob$ the invariant measure for $\mathsf{L}$. We refer to \cite{BGL} for definitions in this abstract setting. Indeed, the only properties used in our proofs are the chain rule and the integration by parts which hold in full generality. The cornerstone of our method relies on the following Theorem which will proved in Section \ref{preuve-thm-stein} based on the content of Sections \ref{remainder-stein} and \ref{computation-stein}.

\medskip

In case of vector-valued random variables $F$ and $G$, we extend our definition to the \emph{matrix-valued carré du champ}
\begin{equation*}
  \Gamma[F,G]_{ij} := \Gamma[F_{i}, G_{j}], \qquad i,j = 1, \dots, d.
\end{equation*}

\begin{theorem}\label{th:stein-approximation}
  Take $F_{1}, \dots, F_{d} \in \dom \mathsf{L}$, $Z_{1}, \dots, Z_{d} \in L^{p}$, and $X := (\mathsf{L} F_{1} + Z_{1}, \dots, \mathsf{L} F_{d} + Z_{d})$.
  Let $\Sigma$ be a positive symmetric matrix, $C=\Sigma^2$ and $N$ be standard Gaussian in $\mathbb R^d$. Then, we have
\begin{equation}\label{eq:stein-bound-wasserstein}
  \begin{split}
    \mathbf{W}_{p}(X, \Sigma N) & \leq \norm{\Sigma}_{op} \norm{N}_{L^{p}} \norm{ \mathrm{id} - \Sigma^{-1} \Gamma[X, -F] \Sigma^{-1} }_{L^{p}} + \norm{\Sigma}_{op} \norm{Z}_{L^{p}}
                              \\& \leq \norm{\Sigma}_{op} \norm{C^{-1}} \norm{N}_{L^{p}} \norm{C - \Gamma[X,-F]}_{L^{p}} + \norm{\Sigma}_{op} \norm{Z}_{L^{p}}.
  \end{split}
\end{equation}
  Moreover, in the univariate case $d=1$, setting $\sigma=\Sigma_{11}$, we obtain
\begin{equation}\label{eq:stein-bound-tv}
  \mathbf{TV}(X, \sigma N) = \mathbf{TV}(\sigma^{-1}X, N) \leq \frac{2}{\sigma^{2}} \norm{ \sigma - \Gamma[X,-F] }_{L^{1}} + \frac{\sqrt{\pi}}{2} \norm{Z}_{L^{1}}.
\end{equation}
\end{theorem}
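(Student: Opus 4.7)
The overall approach is a classical Malliavin--Stein computation adapted to the non-invertible setting. Let $\mathcal{A}_{C}$ denote the Ornstein--Uhlenbeck generator $\mathcal{A}_{C}g(x) = \langle C,\nabla^{2}g(x)\rangle_{HS} - \langle x,\nabla g(x)\rangle$, and let $(P_{t})$ be the associated OU semigroup, whose unique invariant measure is $\mathcal{N}(0,C)$. Given a test function $\phi \colon \mathbb{R}^{d} \to \mathbb{R}$ in the appropriate class (Lipschitz for $\mathbf{W}_{1}$, indicator functions for $\mathbf{TV}$ in the univariate case, and more generally test functions dualizing the relevant metric), the starting point is the semigroup interpolation
\begin{equation*}
  \esp\bracket*{\phi(X)} - \esp\bracket*{\phi(\Sigma N)} = -\int_{0}^{\infty}\esp\bracket*{\mathcal{A}_{C}P_{t}\phi(X)}\,\mathtt{d}t.
\end{equation*}

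The heart of the proof consists in rewriting the integrand using the diffusive structure of $\mathsf{L}$. Fix $t>0$ and set $g := P_{t}\phi$ for brevity. Plugging $X = \mathsf{L}F + Z$ into the drift part of $\mathcal{A}_{C}g(X)$ and applying the integration by parts formula \eqref{eq:ipp} together with the diffusive chain rule $\Gamma[F_{i},g(X)] = \sum_{j}(\partial_{j}g)(X)\Gamma[F_{i},X_{j}]$ gives
\begin{equation*}
  \esp\bracket*{\langle X,\nabla g(X)\rangle} = -\esp\bracket*{\langle \nabla^{2}g(X),\Gamma[F,X]\rangle_{HS}} + \esp\bracket*{\langle Z,\nabla g(X)\rangle}.
\end{equation*}
Using the symmetry $\Gamma[F_{i},X_{j}] = \Gamma[X_{j},F_{i}]$ and the symmetry of the Hessian to absorb the transpose, this combines with the $\langle C,\nabla^{2}g\rangle$-term to produce the identity
\begin{equation*}
  \esp\bracket*{\phi(X)} - \esp\bracket*{\phi(\Sigma N)} = -\int_{0}^{\infty}\!\!\esp\bracket*{\langle \nabla^{2}P_{t}\phi(X),C-\Gamma[X,-F]\rangle_{HS}}\mathtt{d}t + \int_{0}^{\infty}\!\!\esp\bracket*{\langle Z,\nabla P_{t}\phi(X)\rangle}\mathtt{d}t.
\end{equation*}
This is the two-term Stein identity that will drive all the estimates: the first term captures the defect of $\Gamma[X,-F]$ from being the true covariance $C$, and the second quantifies how far $X$ is from the range of $\mathsf{L}$.

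The remaining step is to bound the two integrals via the semigroup. By Mehler's formula, $\nabla P_{t}\phi(x) = \mathrm{e}^{-t}\esp[\nabla\phi(\mathrm{e}^{-t}x + \sqrt{1-\mathrm{e}^{-2t}}\,\Sigma N)]$, and a Gaussian integration by parts with respect to $N$ upgrades this to $\nabla^{2}P_{t}\phi$ at the cost of a factor $\mathrm{e}^{-t}(1-\mathrm{e}^{-2t})^{-1/2}\Sigma^{-1}$. The integrals $\int_{0}^{\infty}\mathrm{e}^{-t}\mathtt{d}t$ and $\int_{0}^{\infty}\mathrm{e}^{-2t}(1-\mathrm{e}^{-2t})^{-1/2}\mathtt{d}t$ are both finite, yielding bounds in terms of $\|\Sigma\|_{op}$, $|C^{-1}|$ and moments of $N$. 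Taking the supremum over test functions dualizing $\mathbf{W}_{p}$ (using Hölder's inequality to extract the $L^{p}$-norms and the factor $\|N\|_{L^{p}}$ coming from the Gaussian IBP) produces the announced bound \eqref{eq:stein-bound-wasserstein}. For the univariate TV bound \eqref{eq:stein-bound-tv}, the semigroup arguments are replaced by the classical explicit bounds on the solution $g$ of the one-dimensional Stein equation $\sigma^{2}g' - xg = \mathds{1}_{B} - \esp[\mathds{1}_{B}(\sigma N)]$, namely $\|g\|_{\infty} \leq \sqrt{\pi}/(2\sigma)$ and $\|g'\|_{\infty} \leq 2/\sigma^{2}$ (valid uniformly in the Borel set $B$).

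The main obstacle is tracking the sharp dependence on $\Sigma$, $C^{-1}$ and $\|N\|_{L^{p}}$ in the multivariate $\mathbf{W}_{p}$ estimate: one must interpolate between the \emph{a priori} $L^{\infty}$-bounds provided by the Mehler formula and the $L^{p}$-integrability of the Stein remainder via Hölder's inequality in such a way that a single Gaussian IBP suffices to control $\nabla^{2}P_{t}\phi$. This Gaussian IBP is what produces the factor $\|N\|_{L^{p}}$ and an inverse power of $\Sigma$, and dictates the form of the final constant $A_{\beta}$. Everything else, in particular the scalar-valued computation underlying \eqref{eq:stein-bound-tv}, is a direct specialization of the identity obtained above.
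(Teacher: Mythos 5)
Your proposal takes a genuinely different (and substantially longer) route than the paper, and for the $\mathbf{W}_p$ case with $p>1$ it contains a real gap.

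The paper's proof is much more economical. It separates the two effects via the triangle inequality
\begin{equation*}
  \mathbf{W}_{p}(X, N) \leq \mathbf{W}_{p}(\mathsf{L}F, N) + \mathbf{W}_{p}(\mathsf{L}F, \mathsf{L}F + Z),
\end{equation*}
then invokes \cref{th:stein-bound} (the Stein-kernel Wasserstein bound cited from \cite{LNP,Fathi,nourdin2009stein}) together with \cref{th:stein-kernel-L} on the first term, and uses the explicit coupling $(\mathsf{L}F,\mathsf{L}F+Z)$ to bound the second term by $\norm{Z}_{L^p}$. The TV bound is obtained by a two-line identity: combining the chain rule and \cref{eq:ipp} to write $\Esp[\varphi'(X)-X\varphi(X)] = \Esp[\varphi'(X)(1-\Gamma[X,-F])] - \Esp[\varphi(X)Z]$ and then taking the supremum over Stein test functions with $\norm{\varphi}_\infty\le\sqrt{\pi/2}$, $\norm{\varphi'}_\infty\le 2$. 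The general $\Sigma$ is dispatched by the Lipschitz contraction $v\mapsto\Sigma v$ and a conjugation identity for $\Gamma$. Your scheme instead re-derives the Stein-kernel bound from scratch via OU semigroup interpolation, Mehler's formula, and Gaussian IBP, and carries the $Z$ term through the interpolation rather than isolating it by coupling. Your univariate TV argument (explicit Stein equation bounds) is fine and equivalent to the paper's up to rescaling.

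The gap is in the step ``taking the supremum over test functions dualizing $\mathbf{W}_p$.'' For $p>1$, $\mathbf{W}_p$ admits no duality over a fixed and tractable class of test functions analogous to the Lipschitz dual for $\mathbf{W}_1$: the Kantorovich dual involves $c$-transform pairs $(\phi,\psi)$ with $\phi(x)+\psi(y)\le|x-y|^p$, which cannot be fed into your semigroup identity in the way you describe. The $\mathbf{W}_p$ Stein-kernel inequalities in \cite{LNP,Fathi} are proved by constructing a transport map (e.g., via interpolation along the semigroup, or moment maps), not by a sup over test functions; the factor $\norm{N}_{L^p}$ in the final bound comes from a moment estimate on the interpolated measure rather than from a pointwise Gaussian IBP bound applied to a dual function. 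As written, your argument only establishes the $\mathbf{W}_1$ and TV cases. To repair the $p>1$ case you would either need to actually construct the transport map (which is the content of the references the paper cites), or, much more simply, adopt the paper's structure: separate the $Z$ term by the triangle inequality and coupling, and invoke the $\mathbf{W}_p$ Stein-kernel lemma as a black box.
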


\subsection{Reminders on Stein kernels}\label{remainder-stein}
Let us first recall important results regarding Stein kernels.
Here we work on a general probability space $(\Omega, \mathfrak{W}, \prob)$.
Given a multivariate random variable $X = (X_{1}, \dots, X_{d})$, we say that a matrix-valued random variable $\tau$, measurable with respect to $X$, is a \emph{Stein kernel} for $X$ provided
\begin{equation*}
  \Esp*{X \cdot \nabla \varphi(X)} = \Esp*{ \tau \cdot \nabla^{2} \varphi(X) }, \qquad \varphi \in \mathscr{C}^{\infty}(\mathbb{R}^{d}).
\end{equation*}
Heuristically, Stein kernels are relevant for normal approximation, since $X$ is a standard multivariate normal variable if and only if it admits $\tau = \mathrm{id}$ as a Stein kernel.
A key observation at the heart of Stein's method are the following quantitative normal approximation inequalities.

\begin{lemma}[Stein kernel bounds \cite{LNP,Fathi,nourdin2009stein} ]\label{th:stein-bound}
  Let $X = (X_{1}, \dots, X_{d})$, let $N$ be a standard normal variable on $\mathbb{R}^{d}$, and $\tau$ be Stein kernel for $X$.
  \begin{enumerate}[(i), wide]
    \item Let $p \in [1, \infty)$.
      If $X \in L^{p}(\Omega)$, then
      \begin{equation*}
        \mathbf{W}_{p}(X,N)^{p} \leq \Esp*{ \abs{N}^{p} } \Esp*{ \norm{ \tau - \mathrm{id}}^{p} }.
      \end{equation*}
    \item  In the univariate setting $d=1$, we have
\begin{equation*}
  \mathbf{TV}(X, N) \leq 2 \Esp*{ \abs{\tau -1} }.
\end{equation*}
\end{enumerate}
\end{lemma}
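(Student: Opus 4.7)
The plan is to deploy Stein's method in its classical form. For each of the two bounds, I would solve an appropriate Stein equation associated with the target Gaussian, use the defining identity of the Stein kernel to perform an integration by parts, and dualize against a suitable class of test functions to recover the chosen metric.

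For (ii), I fix a Borel set $B \subset \mathbb{R}$ and consider the scalar Stein equation
\begin{equation*}
f'(x) - x f(x) = \mathbf{1}_{B}(x) - \mathbf{P}(N \in B),
\end{equation*}
which admits a bounded solution $f_{B}$ satisfying the classical Stein bound $\|f_{B}'\|_{\infty} \leq 2$. Plugging $\varphi = f_{B}$ into the Stein-kernel identity gives
\begin{equation*}
\mathbf{P}(X \in B) - \mathbf{P}(N \in B) = \mathbf{E}[f_{B}'(X) - X f_{B}(X)] = \mathbf{E}[f_{B}'(X)(1 - \tau)],
\end{equation*}
which in absolute value is bounded by $2\, \mathbf{E}[|1-\tau|]$. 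Taking the supremum over Borel sets $B$ yields the stated total variation bound.

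For (i), I would follow Ledoux--Nourdin--Peccati and Fathi, and solve the multivariate Stein equation via the Ornstein--Uhlenbeck semigroup
\begin{equation*}
P_{t}\varphi(x) := \mathbf{E}[\varphi(\mathrm{e}^{-t}x + \sqrt{1-\mathrm{e}^{-2t}}\, N')],
\end{equation*}
with $N'$ an independent standard Gaussian. Since $P_{0}\varphi = \varphi$, $P_{t}\varphi \to \mathbf{E}[\varphi(N)]$ as $t \to \infty$, and $\tfrac{d}{dt}P_{t}\varphi = \mathcal{L}P_{t}\varphi$ with $\mathcal{L}g := \Delta g - x \cdot \nabla g$, writing the difference as a telescoping integral and invoking the Stein-kernel identity produces
\begin{equation*}
\mathbf{E}[\varphi(X)] - \mathbf{E}[\varphi(N)] = -\int_{0}^{\infty} \mathbf{E}\bigl[(\mathrm{id} - \tau) \cdot \nabla^{2} P_{t}\varphi(X)\bigr]\, dt.
\end{equation*}
The Mehler formula combined with one Gaussian integration by parts yields the pointwise estimate $\|\nabla^{2} P_{t}\varphi(x)\| \leq \frac{\mathrm{e}^{-t}}{\sqrt{1-\mathrm{e}^{-2t}}}\, \mathbf{E}[|N'|]\, \|\nabla\varphi\|_{\infty}$, and dualizing against the appropriate class of test functions in the $W_{p}$ Kantorovich representation then produces the stated bound, with the factor $\mathbf{E}[|N|^{p}]$ arising from Gaussian moments in the Mehler representation.

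The main technical obstacle is the case $p > 1$: for $W_{1}$, Kantorovich--Rubinstein duality restricts to $1$-Lipschitz test functions and the argument is essentially the above; for $W_{p}$ with $p > 1$ one must work with test functions of $p$-th order growth following Fathi, and carefully balance the semigroup contraction factor against the singularity at $t = 0$ when passing from the pointwise Mehler estimate to an $L^{p}$ bound via Hölder's inequality. The cleanest route is likely to invoke Fathi's bound directly rather than reprove it in the body of the paper.
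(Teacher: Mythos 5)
The paper does not prove this lemma: it imports it directly from the cited references \cite{LNP,Fathi,nourdin2009stein}, so there is no internal argument to compare against and your proposal must stand on its own.

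Your derivation of (ii) is correct and is the standard one: the indicator Stein equation admits a solution $f_B$ with $\norm{f_B'}_\infty\le 2$ (the paper quotes the equivalent fact from \cite[Thm.\ 3.3.1]{NourdinPeccati} inside the proof of \cref{th:stein-approximation}), and applying the Stein-kernel identity to a primitive of $f_B$ yields $\Prob*{X\in B}-\Prob*{N\in B}=\Esp*{f_B'(X)(1-\tau)}$, from which the bound follows by taking suprema.

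For (i), the semigroup interpolation is the right mechanism, but there is a slip in the Mehler estimate that, if left uncorrected, costs you the stated constant. Writing $P_t\varphi(x)=\Esp*{\varphi(e^{-t}x+\sqrt{1-e^{-2t}}\,N')}$, differentiating once in $x$ and then performing one Gaussian integration by parts before differentiating again gives
\begin{equation*}
  \nabla^2 P_t\varphi(x) = \frac{e^{-2t}}{\sqrt{1-e^{-2t}}}\;\Esp*{N'\otimes\nabla\varphi\bigl(e^{-t}x+\sqrt{1-e^{-2t}}\,N'\bigr)},
\end{equation*}
with decay $e^{-2t}$ (one $e^{-t}$ from the outer spatial gradient, one from the chain rule inside the expectation), not $e^{-t}$ as written. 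This matters: $\int_0^\infty e^{-2t}(1-e^{-2t})^{-1/2}\,\mathtt{d} t=1$, which by Kantorovich--Rubinstein gives exactly $\mathbf{W}_1(X,N)\le\Esp*{\abs{N}}\,\Esp*{\norm{\tau-\mathrm{id}}}$, whereas with $e^{-t}$ the integral equals $\pi/2$ and you would collect a spurious factor. You are right that $p>1$ is genuinely harder, since Kantorovich--Rubinstein duality no longer applies and one should instead build a coupling along the Ornstein--Uhlenbeck flow or invoke \cite{Fathi} directly; as the paper itself cites rather than reproves the lemma, deferring to the literature for that case is the appropriate choice.
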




\subsection{Computations of Stein kernels on \texorpdfstring{$\operatorname{Im} \mathsf{L}$}{Im L}}\label{computation-stein}

The celebrated Malliavin--Stein method \cite{NourdinPeccati} puts forward a strikingly efficient way to compute a Stein kernel for sufficiently smooth functionals of an infinite-dimensional Gaussian field.
\begin{lemma}\label{th:stein-kernel-L}
  Take $F_{1}, \dots, F_{d} \in \dom \mathsf{L}$.
  Let $X := (\mathsf{L} F_{1}, \dots, \mathsf{L} F_{d})$, and define the matrix-valued random variable $\tau = (\tau_{ij})$, where
  \begin{equation*}
    \tau_{ij} := \Esp*{ \Gamma[X_{i}, -F_{j}] \given X }, \qquad i,j = 1,\dots,d.
  \end{equation*}
  Then, $\tau$ is a Stein kernel for $X$.
\end{lemma}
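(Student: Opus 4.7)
The plan is to verify the defining identity
\begin{equation*}
  \Esp*{X \cdot \nabla\varphi(X)} = \Esp*{ \tau \cdot \nabla^{2}\varphi(X) }, \qquad \varphi \in \mathscr{C}^{\infty}(\mathbb{R}^{d}),
\end{equation*}
directly from the integration by parts formula \eqref{eq:ipp} and the chain rule for the diffusive carré du champ. The cornerstone is that the only structural properties of $(\mathsf{L}, \Gamma, \prob)$ used are IBP and the fact that, because $\mathsf{L}$ is a \emph{diffusion}, one has $\Gamma[F, \phi(Y_{1}, \dots, Y_{d})] = \sum_{k} (\partial_{k}\phi)(Y) \, \Gamma[F, Y_{k}]$ for smooth $\phi$.

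\medskip

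First, I would compute $\Esp{X_{i} \partial_{i}\varphi(X)}$ for a fixed $i$. Writing $X_{i} = \mathsf{L} F_{i}$ and applying \eqref{eq:ipp} with second argument $\partial_{i}\varphi(X) \in \dom \Gamma$, I get
\begin{equation*}
  \Esp*{ X_{i}\,\partial_{i}\varphi(X) } = \Esp*{ \mathsf{L} F_{i} \cdot \partial_{i}\varphi(X) } = -\Esp*{ \Gamma[F_{i}, \partial_{i}\varphi(X)] }.
\end{equation*}
The diffusion chain rule applied to $\phi = \partial_{i}\varphi$ and $Y = X$ then yields
\begin{equation*}
  \Gamma[F_{i}, \partial_{i}\varphi(X)] = \sum_{j=1}^{d} (\partial_{ij}^{2}\varphi)(X) \, \Gamma[F_{i}, X_{j}],
\end{equation*}
so that, pulling the sign into the second slot of $\Gamma$,
\begin{equation*}
  \Esp*{ X_{i}\,\partial_{i}\varphi(X) } = \sum_{j=1}^{d} \Esp*{ (\partial_{ij}^{2}\varphi)(X) \, \Gamma[X_{j}, -F_{i}] }.
\end{equation*}

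\medskip

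Summing over $i$, swapping the dummy indices $i \leftrightarrow j$ in the right-hand side and invoking Schwarz's identity $\partial_{ij}^{2}\varphi = \partial_{ji}^{2}\varphi$, I obtain
\begin{equation*}
  \Esp*{X \cdot \nabla\varphi(X)} = \sum_{i,j=1}^{d} \Esp*{ (\partial_{ij}^{2}\varphi)(X) \, \Gamma[X_{i}, -F_{j}] }.
\end{equation*}
Since the matrix $\nabla^{2}\varphi(X)$ is $\sigma(X)$-measurable, conditioning on $X$ inside each expectation replaces $\Gamma[X_{i}, -F_{j}]$ by $\tau_{ij} = \Esp{ \Gamma[X_{i}, -F_{j}] \given X}$, which is exactly the required identity $\Esp{X \cdot \nabla\varphi(X)} = \Esp{ \tau \cdot \nabla^{2}\varphi(X)}$.

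\medskip

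The argument is essentially bookkeeping around IBP and the chain rule, so the only genuine technical point is making sure the integrations by parts are licit; this is a standard density/approximation issue for test functions $\varphi \in \mathscr{C}_{c}^{\infty}(\mathbb{R}^{d})$ extended to $\mathscr{C}^{\infty}$ with bounded derivatives by the usual truncation. The symmetry step (swapping $i$ and $j$) is the one place one must pay attention: a priori the natural object coming out of the calculation is $\Gamma[X_{j}, -F_{i}]$, and the statement of the lemma uses $\Gamma[X_{i}, -F_{j}]$; these coincide as $\tau \cdot \nabla^{2}\varphi$ contractions precisely because the Hessian is symmetric, so the distinction does not affect the definition of a Stein kernel (and in fact one may symmetrize $\tau$ if desired).
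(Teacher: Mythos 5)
Your proof is correct and follows essentially the same route as the paper: integration by parts followed by the diffusion chain rule, then conditioning on $X$. You are in fact slightly more careful than the paper, which writes $\Gamma[F_i,X_j]$ and glosses over the index transposition needed to match the lemma's definition $\tau_{ij}=\Esp*{\Gamma[X_i,-F_j]\given X}$; your observation that this is harmless by symmetry of the Hessian (or by symmetrizing $\tau$) is exactly right.
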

\begin{proof}
  Take $\varphi \in \mathscr{C}^{\infty}(\mathbb{R}^{d})$.
  Then, using the integration by parts, and then the chain rule, we get
  \begin{equation*}
    \Esp*{ X \cdot \nabla \varphi(X) } = \sum_{i=1}^{l} \Esp*{ \mathsf{L} F_{i} \partial_{i} \varphi(X) } = - \sum_{i,j} \Esp*{ \partial_{ij} \varphi(X) \Gamma[F_{i}, X_{j}] }.
  \end{equation*}
  This concludes the proof in view of the definition of a Stein kernel.
\end{proof}

Combining \cref{th:stein-bound,th:stein-kernel-L}, we obtain the following $\Gamma$-Stein bound.

\begin{theorem}
  Take $F_{1}, \dots, F_{d} \in \dom \mathsf{L}$, and $X := (\mathsf{L} F_{1}, \dots, \mathsf{L} F_{d})$.
  Then
  \begin{equation*}
    \mathbf{W}_{p}(X, N)^{p} \leq \Esp*{ \abs{N}^{p} } \Esp*{ \norm{\operatorname{id} - \Gamma[X, -F]}^{p} }.
  \end{equation*}
  Moreover, in the univariate case $d=1$, we find
\begin{equation*}
  \mathbf{TV}(X,N)\le 2 \Esp*{ \abs{1-\Gamma[X,-F]} }.
\end{equation*}
\end{theorem}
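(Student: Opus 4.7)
The plan is to directly combine Lemmas \ref{th:stein-kernel-L} and \ref{th:stein-bound}, gluing them together with a conditional Jensen inequality. By Lemma \ref{th:stein-kernel-L}, the matrix-valued random variable $\tau = (\tau_{ij})$ with entries $\tau_{ij} := \Esp*{\Gamma[X_{i}, -F_{j}] \given X}$ is a Stein kernel for $X = (\mathsf{L}F_{1}, \dots, \mathsf{L}F_{d})$. Plugging this particular Stein kernel into the multivariate bound of Lemma \ref{th:stein-bound}(i) immediately yields
\[
  \mathbf{W}_{p}(X,N)^{p} \leq \Esp*{\abs{N}^{p}}\,\Esp*{\norm{\tau - \mathrm{id}}^{p}},
\]
while the univariate version Lemma \ref{th:stein-bound}(ii) gives $\mathbf{TV}(X,N) \leq 2\,\Esp*{\abs{\tau - 1}}$.

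It then remains to trade $\tau$ for $\Gamma[X,-F]$ inside these bounds. In the multivariate case, the Hilbert--Schmidt norm $\norm{\cdot}$ fixed in Section \ref{sec:notations} is convex, and since $t \mapsto t^{p}$ is convex and non-decreasing on $[0,\infty)$ for $p \geq 1$, the map $M \mapsto \norm{M - \mathrm{id}}^{p}$ is convex. Using that $\mathrm{id}$ is constant, the conditional Jensen inequality therefore gives
\[
  \norm{\tau - \mathrm{id}}^{p} = \norm*{\Esp*{\Gamma[X,-F] - \mathrm{id} \given X}}^{p} \leq \Esp*{\norm{\Gamma[X,-F] - \mathrm{id}}^{p} \given X}.
\]
Taking full expectations and invoking the tower property, I obtain $\Esp*{\norm{\tau - \mathrm{id}}^{p}} \leq \Esp*{\norm{\Gamma[X,-F] - \mathrm{id}}^{p}}$, which is the desired bound. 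The exact same argument applied to the convex map $t \mapsto \abs{t-1}$ settles the univariate case and produces $\Esp*{\abs{\tau - 1}} \leq \Esp*{\abs{\Gamma[X,-F] - 1}}$.

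No serious obstacle is expected here: this theorem is essentially just the concatenation of the two preceding lemmas, cemented by a textbook Jensen step. The only point requiring mild attention is to ensure that the matrix norm involved in Lemma \ref{th:stein-bound}(i) is genuinely convex, which is transparent once one recalls that $\norm{\cdot}$ denotes the Hilbert--Schmidt norm.
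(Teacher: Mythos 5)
Your proposal is correct and takes exactly the route the paper intends: the theorem is stated in the paper as an immediate consequence of combining \cref{th:stein-kernel-L} with \cref{th:stein-bound}, and the only thing the paper leaves implicit is the conditional Jensen step $\Esp*{\norm{\tau-\mathrm{id}}^{p}}\le\Esp*{\norm{\Gamma[X,-F]-\mathrm{id}}^{p}}$, which you justify correctly via convexity of the Hilbert--Schmidt norm composed with $t\mapsto t^{p}$.
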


\begin{remark} The classical approach on the Wiener space relies on the invertibility of the generator of the Ornstein--Uhlenbeck process.
  In this case, it is known that
  \begin{equation*}
    \tau_{ij} := \Esp*{ \Gamma[X_{i}, -\mathsf{L}^{-1}X_{j}] \given X }, \qquad i,j = 1, \dots, d,
  \end{equation*}
  is a Stein kernel for $X \in L^{2}(\Omega)$ with $\esp X = 0$.
  This strategy has also been applied in the setting of other diffusive Markov generators with discrete spectrum, see for instance \cite{ACP,LNP}.
  This actually could be further generalized to any diffusive Markov generators that is invertible away from constants.
  Note that, the generator of the Dyson Brownian motion $\mathsf{L}$ is in general not invertible, in particular it is a priori not true that a given linear statistics belongs to the range of $\mathsf{L}$. Therefore, we need to extend the above techniques for random variables not belonging to the range of $\mathsf{L}$ which is precisely the object of the next section.
\end{remark}

\subsection{Normal approximation away from \texorpdfstring{$\operatorname{Im} \mathsf{L}$}{Im L}}\label{preuve-thm-stein}

Given $X$, finding $F$ such that $\mathsf{L} F = X$ is a demanding task.
Indeed as already mentioned, $\mathsf{L}$ may be not invertible. Nevertheless, as we now demonstrate, the bounds obtained by the Stein's method can be amended in order to cover the case where $X$ is close to the range of $\mathsf{L}$ that is to say of the form $X = \mathsf{L} F + Z$ with small perturbation $Z$.

\begin{proof}[Proof of {\cref{th:stein-approximation}}]
  We first prove the claim in the case $\Sigma = \mathrm{id}$, and then explain how to handle the case of general non-degenerate covariance.
  For the upper bound \cref{eq:stein-bound-wasserstein}, by the triangle inequality, we have that
  \begin{equation*}
    \mathbf{W}_{p}(X, N) \leq \mathbf{W}_{p}(\mathsf{L}F, N) + \mathbf{W}_{p}(\mathsf{L}F, \mathsf{L}F + Z).
  \end{equation*}
  We apply \cref{th:stein-kernel-L} on the first term on the right-hand side.
  For the second term by choosing the coupling $(\mathsf{L}F, \mathsf{L}F+Z)$ in the minimization problem defining $\mathbf{W}_{p}$, we see that
  \begin{equation*}
    \mathbf{W}_{p}(\mathsf{L}F, \mathsf{L}F+Z)^{p} \leq \Esp*{ \norm{\mathsf{L}F - \mathsf{L}F +Z}^{p} } = \Esp*{ \norm{Z}^{p}}.
  \end{equation*}
  This concludes the proof of \cref{eq:stein-bound-wasserstein}.

  We now prove \cref{eq:stein-bound-tv}.
Since, by the chain rule an integration by parts, we have
\begin{equation*}
  \Esp*{ \varphi'(X)\Gamma[X,-F] } = \Esp*{ \varphi(X) \mathsf{L} F }=\Esp*{ \varphi(X) X }-\Esp*{ \varphi(X) Z },
\end{equation*}
we find that
\begin{equation*}
\Esp*{ \varphi'(X)-X\varphi(X) } = \Esp*{ \varphi'(X)(1-\Gamma[X,-F]) }
 - \Esp*{ \varphi(X) Z }.
\end{equation*}
  Recalling that, by \cite[Thm.\ 3.3.1]{NourdinPeccati},
\begin{equation*}
  \mathbf{TV}(X,N) \leq \Sup*{ \Esp*{\varphi'(X)} - \Esp*{X \varphi(X)} : \norm{\varphi}_{\infty} \leq \sqrt{\frac{\pi}{2}},\, \norm{\varphi'}_{\infty} \leq 2 },
\end{equation*}
this concludes the proof in the case where $\Sigma = \mathrm{id}$. 
Now we take an general positive, hence invertible, covariance matrix $\Sigma$.
On the one hand, by the stability of Wasserstein distances under the Lipschitz map $\mathbb{R}^{n} \ni v \mapsto \Sigma v$, we find that
\begin{equation*}
  \mathbf{W}_{p}(X, \Sigma N) \leq \norm{\Sigma}_{op} \mathbf{W}_{p}(\Sigma^{-1}X,N).
\end{equation*}
Recall that $\norm{\cdot}_{op}$ is the operator norm with respect to the Euclidean norm on $\mathbb{R}^{n}$.
On the other hand, a routine computation yields
\begin{equation*}
  \Gamma[\Sigma^{-1} X, - \Sigma^{-1}F] = \Sigma^{-1} \Gamma[X, -F] \Sigma^{-1}.
\end{equation*}
Thus by \cref{eq:stein-bound-wasserstein}, we get
\begin{equation}\label{eq:stein-bound-wasserstein:covariance}
  \begin{split}
    \mathbf{W}_{p}(X, \Sigma N) & \leq \norm{\Sigma}_{op} \norm{N}_{L^{p}} \norm{ \mathrm{id} - \Sigma^{-1} \Gamma[X, -F] \Sigma^{-1} }_{L^{p}} + \norm{\Sigma}_{op} \norm{Z}_{L^{p}}
                              \\& \leq \norm{\Sigma}_{op} \norm{C^{-1}} \norm{N}_{L^{p}} \norm{C - \Gamma[X,-F]}_{L^{p}} + \norm{\Sigma}_{op} \norm{Z}_{L^{p}}.
  \end{split}
\end{equation}
Similarly, in the univariate case $d=1$, we obtain the bound \cref{eq:stein-bound-tv}.
\end{proof}

\section{The master equation for linear statistics}\label{s:master-equation}
We now apply the content of the previous section to the specific operator $\mathsf{L}$ which is the generator of the Dyson Brownian motion which was given in equation \eqref{eq:generator}.
\subsection{Notations and introduction of relevant operators}\label{master-operator-def}
We show here that linear statistics associated with $\beta$-ensembles can indeed be expressed in the form $\mathsf{L}F + Z$ thus making them good candidates to apply our \cref{th:stein-approximation}.
For a more concise exposition of our results and proofs, we adopt the following notations related to the empirical measure:
\begin{align*}
  & \nu_n := \sum_{i=1}^n \delta_{\lambda_i},\ \text{and}\quad \ \bar{\nu}_{n} := \nu_n-n\mu_V;
\\& \mu_n := \frac{\nu_n}{n}, \ \text{and}\quad \ \bar{\mu}_{n} := \mu_n-\mu_V.
\end{align*}
We will write $\pairing{f}{\mu}$ for the integration of a function $f$ against a measure $\mu$.
We consider the following operators, acting on functions $f \in \mathscr{C}^{2}(\mathbb{R})$:
\begin{align*}
    & T_V(f)(x) := \int \frac{f(x)-f(y)}{x-y} \mathtt{d} \mu_V(y) = \int \int_{0}^{1} f'((1-u)x + u y) \mathtt{d} u \mu_{V}(\mathtt{d} y);
  \\& \Theta_V(f) := -V' f + T_V(f);
  \\& T_n(f)(x) := \int \int_{0}^{1} f''((1-u)x + u y) \mathtt{d} u \mu_{n}(\mathtt{d} y) = \frac{1}{n} \sum_{i=1}^{n} \frac{f'(x) - f'(\lambda_{i})}{x - \lambda_{i}} 1_{x \ne \lambda_{i}} + f''(\lambda_{i}) 1_{x = \lambda_{i}};
  \\& \Theta_n(f) := -V' f' + T_n(f).
\end{align*}

To some extend, $T_n$ and $\Theta_n$ are the empirical counterparts of $T_V$ and $\Theta_V$.
Since $\mu_V$ is compactly supported and since $f$ is chosen $\mathscr{C}^2$, the integrals above are all convergent. This shows in particular that the operators $T_V$ and $\Theta_V$ are well-defined. We recall an \emph{invertibility result} for $\Theta_{V}$ taken from \cite{BLS}.

\begin{lemma}[{\cite[Lem.\ 3.3]{BLS}}]\label{th:inverse-theta}
There exists $\delta > 0$, such that if $U := (-1-\delta,1+\delta)$, for all $\xi \in \mathscr{C}^{6}(\mathbb{R})$, there exist a constant $c_{\xi}$ and $\psi \in \mathscr{C}^{5}_{c}(\mathbb{R})$ such that
\begin{align}
  & \label{eq:inverse-theta} (\Theta_{V}\psi)(x) = \xi(x) + c_{\xi}, \qquad x \in U,
\\& \label{eq:bound-inverse-theta} \norm{\psi}_{\mathscr{C}^{5}(\mathbb{R})} \leq C \norm{\xi}_{\mathscr{C}^{6}(\mathbb{R})}.
\end{align}
\end{lemma}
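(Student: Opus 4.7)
The plan is to invert the master operator explicitly on the support $\Sigma_V = [-1,1]$ via the Tricomi (airfoil) inversion formula for the finite Hilbert transform, then extend the solution to a compactly supported $\mathscr{C}^5$ function on $\mathbb{R}$. First, I would exploit the Euler--Lagrange characterization of the equilibrium measure, which gives $V'(x) = \mathrm{PV}\!\int (x-y)^{-1} \mu_V(\mathtt{d}y)$ for $x \in \Sigma_V$, to rewrite, for $x \in \Sigma_V$,
\[
\Theta_V(\psi)(x) = -V'(x)\psi(x) + \int \frac{\psi(x)-\psi(y)}{x-y}\,\mu_V(\mathtt{d}y) = -\,\mathrm{PV}\!\int \frac{\psi(y)}{x-y}\,\mu_V(\mathtt{d}y).
\]
Decomposing $\mu_V = S\,\mu_{sc}$ with $S$ positive and smooth on $[-1,1]$ (ensured by \cref{ass:single-cut}), the equation $(\Theta_V\psi)(x) = \xi(x) + c_\xi$ on $[-1,1]$ reduces, after absorbing $S$ into the unknown, to the classical finite Hilbert transform (airfoil) equation with right-hand side $-\xi - c_\xi$.

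Next I would apply the Tricomi inversion formula. It provides a family of explicit solutions indexed by endpoint behavior, together with a single solvability condition of the form $\int (\xi + c_\xi)\,\rho = 0$ that determines the constant $c_\xi$ from $\xi$. Selecting the unique solution of index $-1/2$ (integrable at the endpoints), one obtains $\psi$ on $[-1,1]$ as a double singular integral of $\xi/S$ with kernels built from the semi-circular weight. Boundedness of the finite Hilbert transform on H\"older and Sobolev classes then yields $\|\psi\|_{\mathscr{C}^5([-1,1])} \lesssim \|\xi\|_{\mathscr{C}^6(\mathbb{R})}$, the loss of one derivative being the generic cost of this inversion.

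To obtain a solution on an open neighborhood $U = (-1-\delta, 1+\delta)$ of $\Sigma_V$, I would note that the defining integrals of $\psi$ are real-analytic in $x \notin \Sigma_V$ and extend smoothly across the endpoints through the explicit formula, at least for $\delta$ small enough; a Taylor expansion in $\delta$ controls the deterioration of the $\mathscr{C}^5$ norm on the enlarged interval. Finally I would multiply by a fixed smooth cutoff $\chi \in \mathscr{C}^\infty_c(\mathbb{R})$ that is identically $1$ on a smaller neighborhood of $\Sigma_V$ and supported inside $U$; on that smaller neighborhood the identity $(\Theta_V\psi)(x) = \xi(x) + c_\xi$ is preserved, so one can take it to be the final $U$ in the statement.

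The main obstacle will be the endpoint analysis: the inverse of the finite Hilbert transform generically produces square-root singularities at $\pm 1$, and one must select the branch whose singularities are exactly absorbed by the semi-circular density of $\mu_V$, which in turn relies critically on the regularity assumption that $S > 0$ up to the edges. Verifying that this cancellation is compatible with mapping $\mathscr{C}^6(\mathbb{R})$ into $\mathscr{C}^5(\mathbb{R})$, with no further derivative loss, is the delicate quantitative step behind \cref{eq:bound-inverse-theta}.
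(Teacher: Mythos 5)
The paper does not prove this lemma; it is cited verbatim from \cite{BLS}, so there is no ``in-paper'' argument to compare against. Your strategy — reduce $\Theta_V$ to the finite Hilbert transform via Euler--Lagrange, invert by Tricomi, fix $c_\xi$ by the solvability condition, estimate via mapping properties of the singular integral, then cut off — is indeed the spirit of the proof in \cite{BLS}, and the bulk of the outline is sound.

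However, the extension step from $[-1,1]$ to the open neighborhood $U=(-1-\delta,1+\delta)$ as you present it has a genuine gap. Your identity
\[
(\Theta_V\psi)(x) = -\,\mathrm{PV}\!\int \frac{\psi(y)}{x-y}\,\mu_V(\mathtt{d}y)
\]
uses the differentiated Euler--Lagrange relation $V'(x)=\mathrm{PV}\int (x-y)^{-1}\mu_V(\mathtt{d}y)$, which holds \emph{only} for $x\in\Sigma_V$. For $x\in U\setminus\Sigma_V$ the operator reads
\[
(\Theta_V\psi)(x)=\psi(x)\Bigl(\smallint(x-y)^{-1}\mu_V(\mathtt{d}y)-V'(x)\Bigr)-\smallint\frac{\psi(y)}{x-y}\,\mu_V(\mathtt{d}y),
\]
with a nonzero zeroth-order coefficient; the Tricomi inverse, tailored to the finite Hilbert transform, does not automatically solve this modified equation outside the cut. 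Moreover, the appeal to real-analytic continuation is unavailable here: under \cref{ass:single-cut}, $V$ is only $\mathscr{C}^7$, so neither $V'$ nor $\Theta_V\psi$ extends analytically across $\pm1$. The correct mechanism is structural rather than analytic: off-criticality (the ``Regularity'' item of \cref{ass:single-cut}, $S>0$ up to the edges) forces the effective-potential derivative $V'(x)-\int(x-y)^{-1}\mu_V(\mathtt{d}y)$ to vanish like $c_\pm\sqrt{\abs{x\mp1}}$ with $c_\pm\neq0$ just outside $\Sigma_V$, and this is precisely the square-root factor produced by the Tricomi formula. One must then \emph{define} $\psi$ on $U\setminus\Sigma_V$ by solving the above algebraic relation (dividing by the nonvanishing coefficient) and verify that this outer branch glues to the inner Tricomi branch $\mathscr{C}^5$-smoothly at $\pm1$ — this matching, not the cutoff $\chi$, is where the single-derivative loss and the hypothesis $S>0$ are really consumed. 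You flag the endpoint branch selection as the main obstacle, which is a closely related issue, but the change of form of the equation across $\partial\Sigma_V$ and the non-analyticity of $V$ must both be addressed explicitly for the argument to close.
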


For the rest of the paper, we fix the margin $\delta>0$ and the associated neighborhood $U$ given by Lemma \ref{th:inverse-theta} above.

\begin{remark}
  \begin{enumerate}[(a),wide]
\item A slight variant of this invertibility result can be found in \cite[Lem.\ 3.2]{BGF}.
    \item We do not assume \cite[(H3)]{BLS}, however as noticed in this last reference, this assumption is not used in the proof of \cite[Lem.\ 3.3]{BLS}.
\item Since we are in the single-cut case, with no singular density, with the notations of \cite{BLS}, we have here $\mathsf{k} = 0$ and $\mathsf{m} = 0$.
\end{enumerate}
\end{remark}

\subsection{The rigidity estimates and its consequences}

Many ingredients in our proof proceed from a rigidity result from \cite{BEY} which is also used in \cite{LLW}.

\begin{theorem}[{\cite[Thm 2.4]{BEY}}]\label{th:rigidity}
  Let $\rho_0 := -1$, and $\rho_{j}$ be the $j/n$-quantile of $\mu_{V}$ for $j\in\{1,\cdots,n\}$.
  Set $\hat{\jmath} := \min(j,n-j+1)$ and write $(\lambda_{(j)})$ for the increasingly-ordered vector of $\lambda_{j}$'s.
  For any $\varepsilon > 0$, there exist $c_{\varepsilon} >0$ and $N_{\varepsilon}>0$ such that for all $n \geq N_{\varepsilon}$
\begin{equation}\label{rigidity-equation}
  \Prob*{ \exists j\in \lbrack 1,n \rbrack : \abs{\lambda_{(j)} -\rho_{j}}  > \hat{\jmath}^{-\frac{1}{3}} n^{-\frac{2}{3} + \varepsilon} }\le \exp(-n^{c_{\varepsilon}}).
\end{equation}
\end{theorem}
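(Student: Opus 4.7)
Since the statement is cited verbatim as \cite[Thm.\ 2.4]{BEY}, the natural plan is simply to invoke that reference; what follows outlines the strategy one would use to prove it independently, which is the standard \emph{local law} $\Rightarrow$ \emph{rigidity} paradigm. The plan has two main steps: (i) establish an optimal local law for the empirical Stieltjes transform
\begin{equation*}
  m_n(z) := \frac{1}{n}\sum_{i=1}^{n} \frac{1}{\lambda_i - z}, \qquad z = E + \mathrm{i}\eta \in \mathbb{C}_+,
\end{equation*}
showing that $\abs{m_n(z) - m_V(z)} \leq (n\eta)^{-1+o(1)}$ with probability at least $1 - \exp(-n^{c_\varepsilon})$ uniformly for $n^{-1+\varepsilon} \leq \eta \leq 1$; (ii) convert this spectral estimate into pointwise rigidity of the ordered particles.

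For step (i), the core tool is the hierarchy of loop (Schwinger--Dyson) equations obtained by integrating by parts against $\mathrm{e}^{-\beta H_n}$ the trivial identity $\int \partial_{\lambda_i}\bracket{(\lambda_i - z)^{-1} \mathrm{e}^{-\beta H_n}} \mathtt{d}\lambda = 0$. This produces a nonlinear self-consistent equation for $m_n(z)$ modulo a fluctuation remainder. Starting from a crude a priori bound---derived from concentration of the empirical measure under the semi-convexity of $V$ and the convexity of $-\log\abs{x-y}$ on tangent directions orthogonal to translations, or from the Ben Arous--Guionnet large deviation principle---a standard bootstrap progressively improves the scale $\eta$ until the optimal precision $(n\eta)^{-1}$ is attained. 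The stretched-exponential probability bound follows from log-Sobolev-type inequalities (available under semi-convexity) together with a polynomial net argument in the spectral parameter $z$, combined with Lipschitz continuity of $m_n$ off the real axis.

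For step (ii), the Helffer--Sjöstrand formula applied to a smoothed version of $\mathds{1}_{(-\infty,E]}$ transfers the bound on $m_n - m_V$ into a bound on the counting function $\mathcal{N}_n(E) := \#\set{i : \lambda_i \leq E}$, yielding $\abs{\mathcal{N}_n(E) - n\mu_V((-\infty,E])} \leq n^{\varepsilon}$ uniformly in $E$, with overwhelming probability. Inverting the quantile representation then gives the claimed $\abs{\lambda_{(j)} - \rho_j} \leq \hat{\jmath}^{-1/3} n^{-2/3+\varepsilon}$: the power $-1/3$ of $\hat{\jmath}$ reflects the square-root vanishing of the density of $\mu_V$ near the edges $\pm 1$ (ensured by the regularity hypothesis in \cref{ass:single-cut}), while $n^{-2/3}$ is the typical edge spacing.

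The main obstacle is the loop-equation bootstrap at the optimal scale $\eta \sim n^{-1+\varepsilon}$: the self-consistent equation is nonlinear in $m_n$ and its inversion requires a delicate stability analysis, while the behavior near the spectral edges (where the square-root vanishing has to be tracked precisely) is the subtlest portion of the argument. The single-cut, positive-density-relative-to-semicircle hypothesis in \cref{ass:single-cut} is precisely what renders this analysis feasible, so the argument of \cite{BEY} applies directly under our assumptions.
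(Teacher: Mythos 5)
The paper does not prove this statement itself; it is cited verbatim from \cite{BEY}, and you correctly recognize this at the outset. Your sketch of the BEY local-law-to-rigidity strategy (loop-equation bootstrap for the Stieltjes transform, then Helffer--Sjöstrand to control the counting function, with the $\hat{\jmath}^{-1/3}n^{-2/3}$ scaling coming from the square-root edge behavior) is an accurate summary of how that reference proceeds, so the proposal is consistent with the paper's treatment.
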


As anticipated, we repeatedly use in our proofs, the following corollary on the control of the outliers.
Philosophically, it allows us to work on $U = (-1-\delta, 1+\delta)$ and to discard the rest up to accepting a negligible loss.
The next result follows directly from the Theorem \ref{th:rigidity} and fact that the locations $(\rho_{j})_{1\le j \le n}$ belong to $[-1,1]$.

\begin{lemma}\label{th:outlier-control}
  There exists $C > 0$ and $c > 0$ such that for $n$ large enough
\begin{equation}\label{eq:outlier-control}
\Prob*{ \max_{1\le i \le n}|\lambda_i|\ge 1+\delta }\le C \Exp*{-n^{c}}.
\end{equation}
\end{lemma}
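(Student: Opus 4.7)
The plan is to deduce the outlier control directly from the rigidity estimate \cref{th:rigidity}, using only the elementary fact that the quantile grid $(\rho_j)_{1\le j\le n}$ sits inside the support $[-1,1]$ of $\mu_V$. Fix any small exponent, say $\varepsilon=1/6$, and apply \cref{th:rigidity} with this choice. On the complement of the exceptional event in \cref{rigidity-equation}, one has $|\lambda_{(j)}-\rho_j|\le \hat{\jmath}^{-1/3} n^{-2/3+\varepsilon}$ for every $j$, and since $\hat{\jmath}\ge 1$ one gets the cruder but uniform bound $|\lambda_{(j)}-\rho_j|\le n^{-1/2}$.

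Since $\rho_j\in[-1,1]$ for every $j$, the triangle inequality yields $|\lambda_{(j)}|\le 1+n^{-1/2}$ for all $j$. The maximum of the $|\lambda_i|$ coincides with $\max(|\lambda_{(1)}|,|\lambda_{(n)}|)$, so for any $n$ large enough to ensure $n^{-1/2}<\delta$, we have the inclusion of events
\begin{equation*}
\set*{ \max_{1\le i\le n}|\lambda_i|\ge 1+\delta }\ \subset\ \set*{ \exists j\in[1,n]:\ |\lambda_{(j)}-\rho_j|> \hat{\jmath}^{-1/3} n^{-2/3+\varepsilon} }.
\end{equation*}
Taking probabilities and invoking \cref{th:rigidity} gives the announced estimate with $c:=c_{1/6}$ and any constant $C\ge 1$, for $n$ larger than the threshold $N_{1/6}$ from \cref{th:rigidity} and larger than the deterministic threshold ensuring $n^{-1/2}<\delta$.

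There is essentially no obstacle here: the statement is really a rewriting of \cref{th:rigidity} combined with the geometric fact $\supp\mu_V=[-1,1]$. The only minor point to be careful about is that the rigidity bound must be made uniform in $j$; this is automatic because the right-hand side $\hat{\jmath}^{-1/3} n^{-2/3+\varepsilon}$ is maximized (for $\hat{\jmath}\ge 1$) by $n^{-2/3+\varepsilon}$, which is much smaller than $\delta$ as soon as $\varepsilon<2/3$ and $n$ is large.
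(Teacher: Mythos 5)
Your proof is correct and is exactly the argument the paper has in mind: the authors simply remark that the lemma ``follows directly from Theorem~\ref{th:rigidity} and the fact that the locations $(\rho_j)_{1\le j\le n}$ belong to $[-1,1]$,'' and you have spelled out precisely that deduction (choose a fixed $\varepsilon$, bound $\hat{\jmath}^{-1/3}n^{-2/3+\varepsilon}$ uniformly by $n^{-1/2}$, use $\rho_j\in[-1,1]$ and the triangle inequality, and absorb the deterministic gap into ``$n$ large enough'').
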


\begin{remark}
  Similar results are numerous in the literature: even with weaker assumptions on the potential $V$ than the ones in \cite{BEY} for which the rigidity is not yet known, one can derive better rate of convergence, such as exponential decay. Such exponential decay follows, for instance, from the large deviations principle for the extreme positions, as in \cite[Prop.\ 2.1]{BorotGuionnet}.
Under strong assumptions on the potential $V$, the reference \cite[Lem.\ 4]{MMS} offers an exponential decay with an explicit dependence on $\delta$, which, of course we do not need since our margin $\delta$ is fixed.
See also \cite[Thm.\ 1.12]{CHM} for similar a similar result under weaker assumptions in dimension greater than two. 
As the rigidity estimate given by Theorem \ref{th:rigidity} is in fact used to quantify the speed of convergence to equilibrium in the next Lemma \ref{th:llg-quantitative} and since the associated bound \cref{eq:outlier-control} is sufficient for our purpose, we have decided to use the rigidity also to control the outliers.
\end{remark}

As proved in \cite{LLW}, \cref{th:rigidity} provides a polynomial speed of convergence to equilibrium.
Namely, they obtain the following lemma that we reproduce below for completeness.

\begin{lemma}[{\cite[Lemma 5.3]{LLW}}]\label{th:llg-quantitative}
Let $\alpha>0$ and $p\ge 1$.
Then, there exists $K_{p,\alpha}>0$ such that for any bounded and Lipschitz function $f$ we have
\begin{equation}\label{eq:cvgce-equilibrium}
  \norm{ \pairing{f}{\mu_n - \mu_{V}}}_{L^{p}} \le \frac{1}{n^{1-\alpha}} K_{p,\alpha} (\norm{f}_\infty + \norm{f'}_\infty) .
\end{equation}
\end{lemma}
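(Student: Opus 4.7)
The plan is to combine the rigidity estimate of \cref{th:rigidity} with an elementary quantile approximation. Let $\rho_0 := -1$ and $\rho_j$ be the $j/n$-quantile of $\mu_V$, write $(\lambda_{(j)})$ for the increasingly-ordered particles, and decompose
\begin{equation*}
\pairing{f}{\mu_n - \mu_V} = \underbrace{\frac{1}{n}\sum_{j=1}^n \paren*{f(\lambda_{(j)}) - f(\rho_j)}}_{=:\,A_n} + \underbrace{\paren*{\frac{1}{n}\sum_{j=1}^n f(\rho_j) - \pairing{f}{\mu_V}}}_{=:\,B_n}.
\end{equation*}
The deterministic quantile-approximation remainder $B_n$ is controlled directly: since $\mu_V([\rho_{j-1},\rho_j]) = 1/n$, we have $B_n = \sum_j \int_{\rho_{j-1}}^{\rho_j}(f(\rho_j)-f(x))\mu_V(\mathtt{d}x)$, which the mean value theorem bounds by $\norm{f'}_\infty \sum_j (\rho_j - \rho_{j-1})/n \leq 2 \norm{f'}_\infty/n$, well within the required rate.

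For the stochastic term $A_n$, I would introduce the rigidity event $\mathcal{G}_n := \{\abs{\lambda_{(j)} - \rho_j} \leq \hat{\jmath}^{-1/3} n^{-2/3+\alpha/2},\ \forall\, j\}$ given by \cref{th:rigidity} applied with $\varepsilon = \alpha/2$. On $\mathcal{G}_n$, the Lipschitz assumption together with the elementary estimate $\sum_{j=1}^n \hat{\jmath}^{-1/3} = O(n^{2/3})$ yields
\begin{equation*}
\mathds{1}_{\mathcal{G}_n} \abs{A_n} \leq \frac{\norm{f'}_\infty}{n}\, n^{-2/3+\alpha/2} \sum_{j=1}^n \hat{\jmath}^{-1/3} \leq C\, \norm{f'}_\infty\, n^{-1+\alpha/2}.
\end{equation*}
On the complement $\mathcal{G}_n^c$, I would invoke the trivial deterministic bound $\abs{A_n} \leq 2\norm{f}_\infty$. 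Combining via the $L^p$ triangle inequality and using $\Prob{\mathcal{G}_n^c} \leq \exp(-n^{c_{\alpha/2}})$ then delivers
\begin{equation*}
\norm{A_n}_{L^p} \leq C\,\norm{f'}_\infty\, n^{-1+\alpha/2} + 2\norm{f}_\infty \exp\!\paren*{-n^{c_{\alpha/2}}/p},
\end{equation*}
and the exponential contribution is negligible compared to $n^{-1+\alpha}$ for $n$ large. Gathering the estimates on $A_n$ and $B_n$ yields the claim.

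The main technical point is the balance between the edge blow-up $\hat{\jmath}^{-1/3}$ in the rigidity bound and the corresponding Abel-type summation: the elementary computation $\sum_{j=1}^n \hat{\jmath}^{-1/3} = O(n^{2/3})$ is exactly what produces the near-optimal $n^{-1+\alpha}$ rate rather than a worse bulk-versus-edge mismatch. The residual $\alpha$-loss is inherited directly from the $\varepsilon$-loss in \cref{th:rigidity} and cannot be removed at this level of regularity on $V$.
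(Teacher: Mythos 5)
The paper quotes this lemma directly from \cite{LLW} without reproducing a proof, so there is no internal argument to compare against; your reconstruction (quantile decomposition, rigidity on the good event with the Abel-type sum $\sum_{j=1}^{n} \hat{\jmath}^{-1/3} = O(n^{2/3})$, and the trivial bound $\abs{A_n}\le 2\norm{f}_\infty$ off the good event) is correct and is the natural route that the cited reference also follows. The only point left implicit is that \cref{th:rigidity} holds only for $n \geq N_{\alpha/2}$, so the finitely many remaining $n$ must be absorbed into the constant $K_{p,\alpha}$ via the crude bound $\abs*{\pairing{f}{\mu_n - \mu_V}} \le 2 \norm{f}_\infty$; this is routine.
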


\begin{remark}\label{rmk:bootstrap}As we shall see below, our findings allow to upgrade the rate of convergence given in Lemma  \ref{th:llg-quantitative} to get an optimal rate $O(n^{-1})$ for regular enough test functions $f$.
\end{remark}

\subsection{The master equation}
In relation to the Dyson generator $\mathsf{L}$, the master operator $\Theta_{V}$ allows us to derive the following \emph{master equation}, which is at the heart of our argument.

\begin{theorem}\label{th:master}
Consider a test function $f \in \mathscr{C}^{2}(\mathbb{R})$ and define $F := \pairing{f}{\bar{\nu}_{n}}$. Then, we have the following decomposition
  \begin{equation}\label{eq:master-L}
    \frac{\mathsf{L}F}{n} = \frac{2-\beta}{2} \pairing{f''}{ \mu_n} + \beta  \pairing{\Theta_Vf'}{\bar{\nu}_{n}} + \frac{\beta}{2}  \pairing{T_nf'-T_Vf'}{ \bar{\nu}_{n}}.
  \end{equation}
\end{theorem}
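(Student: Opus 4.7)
The plan is to expand $\mathsf{L} F$ directly, reorganize the resulting sums via the operator $T_n$, and then invoke the Euler--Lagrange characterization of $\mu_V$ to obtain the stated centered form. First, I would apply $\mathsf{L}$ to $F = \sum_i f(\lambda_i) - n\pairing{f}{\mu_V}$: the additive constant is annihilated by $\mathsf{L}$, and a term-by-term differentiation yields
\begin{equation*}
\mathsf{L} F = \sum_{i} f''(\lambda_{i}) - \beta n \sum_{i} V'(\lambda_{i}) f'(\lambda_{i}) + \frac{\beta}{2} \sum_{i \neq j} \frac{f'(\lambda_{i}) - f'(\lambda_{j})}{\lambda_{i} - \lambda_{j}},
\end{equation*}
after symmetrizing the repulsion drift via the swap $i \leftrightarrow j$.

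Next, I would close the restricted double sum by adjoining the diagonal. Under the natural convention that $\tfrac{f'(x)-f'(y)}{x-y}$ extends to $f''(x)$ at $x=y$, the completed sum over all pairs equals $n\pairing{T_n f'}{\nu_n}$, so
\begin{equation*}
\sum_{i\ne j}\frac{f'(\lambda_i) - f'(\lambda_j)}{\lambda_i - \lambda_j} = n\pairing{T_n f'}{\nu_n} - n\pairing{f''}{\mu_n}.
\end{equation*}
Dividing $\mathsf{L}F$ by $n$ and combining the two $\pairing{f''}{\mu_n}$ contributions produces the coefficient $\tfrac{2-\beta}{2}$, giving $\tfrac{\mathsf{L}F}{n} = \tfrac{2-\beta}{2}\pairing{f''}{\mu_n} - \beta\pairing{V'f'}{\nu_n} + \tfrac{\beta}{2}\pairing{T_n f'}{\nu_n}$. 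I would then center the last two terms by expanding $\nu_n = \bar\nu_n + n\mu_V$ and regrouping via $\Theta_V f' = -V'f' + T_V f'$ so as to exhibit the target combination $\beta\pairing{\Theta_V f'}{\bar\nu_n} + \tfrac{\beta}{2}\pairing{T_n f' - T_V f'}{\bar\nu_n}$. This leaves a residual term $R$ involving only $\mu_V$-means of $V'f'$, $T_Vf'$, $T_nf'$, together with the stray quantity $\pairing{T_V f'}{\nu_n}$.

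The main --- and only nontrivial --- step is to check that $R$ vanishes. Two ingredients are required. The first is the Fubini identity $\pairing{T_V f'}{\nu_n} = n\pairing{T_n f'}{\mu_V}$, which follows by swapping the finite sum and the $\mu_V$-integral in the definition of $T_V$. Using it, $R$ collapses to a multiple of $\pairing{T_V f'}{\mu_V} - 2\pairing{V' f'}{\mu_V}$. The second --- the only genuinely analytic input --- is the vanishing of this last expression: by symmetrizing the double integral $\iint\tfrac{f'(x)-f'(y)}{x-y}\mu_V(\mathtt{d}x)\mu_V(\mathtt{d}y)$ via the swap $x\leftrightarrow y$, it reduces to the principal-value relation $\int\tfrac{1}{x-y}\mu_V(\mathtt{d}y) = V'(x)$ on $\Sigma_V$, which is precisely the derivative of the Euler--Lagrange equation characterizing the minimizer $\mu_V$ of the mean-field energy $\mathcal{I}_V$ recalled just after \cref{ass:single-cut}. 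Once this identity is invoked, all mean-field residuals cancel and the stated master equation emerges.
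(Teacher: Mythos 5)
Your proof is correct and takes essentially the same route as the paper: expand $\mathsf{L}F$, close the off-diagonal sum into $\pairing{T_n f'}{\nu_n}$ (absorbing the diagonal correction into the $\tfrac{2-\beta}{2}$ coefficient), center against $\bar\nu_n$, and cancel the remaining $\mu_V$-residuals using the Fubini swap $n\pairing{T_n f'}{\mu_V}=\pairing{T_V f'}{\nu_n}$ together with the symmetrized Euler--Lagrange identity $\pairing{V'f'}{\mu_V}=\tfrac12\pairing{T_V f'}{\mu_V}$. The only difference is ordering: you collect all residual terms into a single $R$ before invoking Fubini and Euler--Lagrange, whereas the paper applies the Euler--Lagrange substitution first and the Fubini identity afterwards; this is a cosmetic reorganization of the same computation.
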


\begin{proof}
  Recalling the definitions of $\mathsf{L}$ and $T_{n}$, we get that

  \begin{equation}\label{eq:LF-1}
\begin{split}
    \frac{\mathsf{L}F}{n} &= \pairing{f''}{\mu_n} - \beta \pairing{V' f'}{\nu_n} + \frac{\beta}{2 n} \sum_{i\neq j}^n \frac{f'(\lambda_i)-f'(\lambda_j)}{\lambda_i-\lambda_j}
                        \\&= \frac{2-\beta}{2} \pairing{f''}{\mu_n} - \beta \pairing{V' f'}{\nu_n} + \frac{\beta}{2} \pairing{T_{n}f'}{\nu_{n}}
                        \\&= \frac{2-\beta}{2} \pairing{f''}{\mu_n} -\beta \pairing{V' f'}{\bar{\nu}_{n}} + \frac{\beta}{2} \pairing{T_nf'}{\nu_n} -n\beta \pairing{V' f'}{\mu_V}.
  \end{split}
\end{equation}

Recall that the Euler--Lagrange equation for the minimality of $\mu_V$ with respect to the energy reads, for some $C_{V} \in \mathbb{R}$:
\begin{equation*}
  V(x)- \int \log(|x-y|) \mu_V(\mathtt{d} y) = C_V, \qquad x \in \Sigma_{V}.
\end{equation*}
Differentiating with respect to $x$ leads to 
\begin{equation*}
  V'(x)-\int \frac{1}{x-y} \mu_V(\mathtt{d}y) = 0, \qquad x \in \Sigma_{V}.
\end{equation*}
From there, multiplying by $f'(x)$, integrating with respect to $\mu_{V}(\mathtt{d}x)$, and symmetrizing yields
\begin{equation*}
  \pairing{V'f'}{\mu_V} = \frac{1}{2}\int \frac{f'(x)-f'(y)}{x-y} \mu_V(\mathtt{d}x)\mu_V(\mathtt{d}y) = \frac{1}{2} \pairing{T_Vf'}{\mu_V}.
\end{equation*}

Substituting the latter in \cref{eq:LF-1}, one gets
\begin{equation}\label{eq:LF-2}
  \begin{split}
    \frac{\mathsf{L}F}{n} &= \frac{2-\beta}{2} \pairing{f''}{ \mu_n} - \beta \pairing{V' f'}{\bar{\nu}_{n}}
    + \frac{\beta}{2}  \pairing{T_nf'}{\nu_n} - \frac{\beta}{2}  \pairing{T_Vf'}{n \mu_V}
    \\&= \frac{2-\beta}{2} \pairing{f''}{ \mu_n} - \beta \pairing{V' f'}{\bar{\nu}_{n}} - \frac{\beta}{2} \pairing{T_Vf'}{n \mu_V}
    \\&+ \frac{\beta}{2} \pairing{T_nf'-T_Vf'}{ \bar{\nu}_{n}} + \frac{\beta}{2} \pairing{T_nf'}{n \mu_V} + \frac{\beta}{2}  \pairing{T_Vf'}{\bar{\nu}_{n}}.
\end{split}
\end{equation}

By Fubini Theorem, we have that $\pairing{T_n f'}{n\mu_V} =  \pairing{T_Vf '}{\nu_n}$.
This gives
\begin{equation*}
  \pairing{T_nf'}{n \mu_V}- \pairing{T_Vf'}{n \mu_V}=  \pairing{T_Vf'}{\bar{\nu}_{n}}.
\end{equation*}
Plugging this equality in \cref{eq:LF-2} leads to the announced result.
\end{proof}



\section{Proofs of the main results}\label{sec.proofs}
We now complete the proofs of our main results. Regarding quantitative normal approximation for linear statistics of $\beta$-ensembles,  we follow the strategy described in Sections \ref{sec.stein} and \ref{s:master-equation}, applying the general quantitative bounds given by \cref{th:stein-approximation}.

\subsection{Proof of Theorem \ref{th:normal-approximation}} We first give the proof of Theorem \ref{th:normal-approximation}, which gives a near optimal rate of convergence $O(n^{-1+\alpha})$ for all $\alpha>0$.

\subsubsection{Handling the covariance} \label{sec.invert}
Let us recall the form of the limit covariance matrix $C$ of the linear statistics and let us first prove that it is invertible.
Following \cite[\S 4.1, in particular Eq.\ (4.14)]{LLW}, we have 
\begin{equation*}
c_{ij} = \frac{1}{2\beta} \int \frac{\xi_{i}(x) - \xi_{i}(y)}{x - y} \frac{\xi_{j}(x) - \xi_{j}(y)}{x - y} (1 - xy) \rho(\mathtt{d}x)\rho(\mathtt{d}y) =: \psh{\xi_{i}}{\xi_{j}}_{\mathscr{H}^{1/2}}.
\end{equation*}
The bilinear symmetric form $\psh{\xi_{i}}{\xi_{j}}_{\mathscr{H}^{1/2}}$ is not a scalar product, since $\psh{g}{g}_{\mathscr{H}^{1/2}} = 0$ if and only if $g$ is constant.
Nevertheless,  the matrix $C := (c_{ij})$ is a Gram matrix with respect to the semi-scalar product $\psh{\cdot}{\cdot}_{\mathscr{H}^{1/2}}$.
Thus under the freeness condition \cref{eq:free}, the matrix $C$ is symmetric definite positive, and we write $\Sigma$ for its unique positive symmetric square root.

\subsubsection{Preparatory computations}\label{par:preparatory} We now use the master equation \eqref{eq:master-L} to decompose the linear statistics in a suitable form to apply \cref{th:stein-approximation}.
\par
\medskip
\underline{Reduction to the case \texorpdfstring{$\mathsf{L}F + Z$}{LF + Z}}.
Let us consider $\xi \in \mathscr{C}^{6}(\mathbb{R})$, $\psi \in \mathscr{C}^{5}_{c}(\mathbb{R})$ associated with $\xi$ through \cref{th:inverse-theta} and let $f$ be any primitive of $\psi$.
We then have
\begin{equation*}
  \Theta_{V} f' = \xi + c_{\xi} + \paren*{\Theta_{V}f' - \xi - c_{\xi}} 1_{\mathbb{R} \setminus U}.
\end{equation*}
Since $\pairing{1}{\bar{\nu}_{n}} = 0$, and that $\operatorname{supp} \mu_{V} \subset U$, we find that
\begin{equation*}
  \pairing{\xi}{\bar{\nu}_{n}} = \pairing{\Theta_{V}f'}{\bar{\nu}_{n}} + \sum_{i=1}^{n} 1_{\abs{\lambda_{i}} > 1 + \delta} (\Theta_{V}f' - \xi - c_{\xi}).
\end{equation*}
By \cref{th:outlier-control}, we have that
\begin{equation*}
  \norm*{\sum_{i=1}^{n} 1_{\abs{\lambda_{i}} > 1 + \delta} (\Theta_{V}f' - \xi - c_{\xi})}_{L^{p}}=n \mathrm{e}^{-n^{c}} O\left( \norm{\xi}_{\infty} + \norm{\Theta_{V}f'}_{\infty} + 1\right).
\end{equation*}
From the explicit expression of $\Theta_{V}$ and \cref{eq:bound-inverse-theta}, we find that
\begin{equation*}
  \norm{\Theta_{V}f'}_{\infty} \leq c (\norm{f'}_{\infty} + \norm{f''}_{\infty}) \leq c \norm{\psi}_{\mathscr{C}^{5}(\mathbb{R})} \leq c \norm{\xi}_{\mathscr{C}^{6}(\mathbb{R})}.
\end{equation*}
Combining those two estimates, and since $\pairing{1}{\bar{\nu}_{n}} = 0$, it is sufficient to establish our bound for linear statistics of the form $\pairing{\Theta_{V}f'}{\bar{\nu}_{n}}$, where $f' = \psi \in \mathscr{C}_{c}^{5}(\mathbb{R})$ is associated with $\xi \in \mathscr{C}^{6}(\mathbb{R})$ by \cref{th:inverse-theta}.
Thus, we now study
\begin{equation*}
  X := \paren*{ \pairing{\Theta_{V}f_{1}'}{\bar{\nu}_{n}}, \dots, \pairing{\Theta_{V}f_{d}'}{\bar{\nu}_{n}} }.
\end{equation*}
By \cref{th:master}, we can then decompose $X$ under the form
\[
X = m+\frac{1}{n} \mathsf{L} F+ Z,
\]
where $m=(m_i)_{1\leq i\leq d}$ with $m_i:=(1/2-1/\beta)\pairing{f_{i}''}{\mu_V}$ where 
$F_{i} := \frac{1}{\beta} \pairing{f_{i}}{\bar{\nu}_{n}}$, and
\begin{equation}\label{eq:Z}
  Z_{i} := \paren*{ \frac{1}{2} - \frac{1}{\beta} } \pairing{f_{i}''}{\mu_{n}}-m_i - \frac{1}{2} \pairing{T_{n} f_{i}' - T_{V} f_{i}'}{\bar{\nu}_{n}}.
\end{equation}
In the next section, we apply \cref{th:stein-approximation} to provide quantitative bounds.
\par
\bigskip
\underline{Computation of \texorpdfstring{$\Gamma[X,-F]$}{the carré du champ}}.
We repeatedly use the simple yet remarkable observation that linear statistics are stable under the action of the carré du champ, namely
\begin{equation}\label{eq:gamma-linear-statistics}
\Gamma\bracket[\big]{\pairing{\varphi}{\nu_n}, \pairing{\psi}{\nu_n}} = \pairing{\varphi' \psi'}{\nu_n}, \qquad \varphi, \psi \in \mathscr{C}^{1}(\mathbb{R}).
\end{equation}
Thus,  we find that for $1\leq i , j\leq d$
\begin{equation}\label{eq:gamma-X-F}
  \Gamma\bracket*{X_{i},-\frac{F_{j}}{n}} = \frac{1}{\beta} \pairing{(\Theta_{V}f_{i}')'f_{j}'}{\mu_{n}}.
\end{equation}

\par
\bigskip
\underline{Heuristics.}
As a consequence of the convergence to equilibrium, $\mu_{n} \to \mu_{V}$, we see that the sum of the first two terms on the right-hand side in \cref{eq:Z} converges to zero.
Similarly, by Equation \cref{eq:gamma-X-F}, the term $\Gamma[X_{i}, - F_{j}/n]$ converges to $\frac{1}{\beta}\pairing{\xi_{i}' f_{j}'}{\mu_{V}}$ which will be the limit covariance of $(\langle \xi_i,\bar{\nu_n}\rangle,\langle \xi_j,\bar{\nu_n}\rangle)$. Relying on the expression of the limit variance given in \cite{LLW} and by uniqueness one must therefore have
$$c_{ij}=\lim_n\text{Cov}(\langle \xi_i,\bar{\nu_n}\rangle,\langle \xi_j,\bar{\nu_n}\rangle)=\frac{1}{\beta}\pairing{\xi_{i}' f_{j}'}{\mu_{V}}.$$

However, the determination of the asymptotics of all the terms involving the quadratic term $\pairing{(T_{n}-T_{v}) f_{i}'}{\bar{\nu}_{n}}$ is more delicate.
Indeed, the same heuristic on the convergence to equilibrium shows that this term is of order $o(n)$ whereas one would expect $o(1)$.
We handle this remainder in two steps:
\begin{enumerate}[(i), wide]
  \item We split the quadratic term in a product of linear terms, using Fourier inversion.
    Indeed since $f_{i}' \in \mathscr{C}_{c}^{5}(\mathbb{R})$ is is in the image of the Fourier transform.
  \item We control each of the linear term as before using convergence to equilibrium.
    This is where we need the finer quantitative estimates recalled in \cref{th:llg-quantitative}.
\end{enumerate}

\subsubsection{Quantitative control and completion of the proof} Le us now quantify the remainders mentioned in the last Section. 
\par
\medskip
\underline{Splitting the remainder by Fourier inversion.}
For simplicity, we omit the indices here and write $f$ for $f_i$, $1\leq i \leq d$. 
Since $f' \in \mathscr{C}^{5}_{c}(\mathbb{R})$, we have
\begin{equation}\label{eq:bound-fourier-c4}
  \abs{ \widehat{f''}(t) } \le 2 \frac{\norm{f^{(6)}}_{\infty}}{(1+|t|)^{4}}, \qquad t \in \mathbb{R}.
\end{equation}
In particular, $\widehat{f''}  \in L^1(\mathbb{R})$.
By the Fourier inversion Theorem, one finds
\begin{equation}\label{eq:fourier-splitting}
  \begin{split}
    \pairing{(T_{n} - T_{V})f'}{\bar{\nu}_{n}} &= n \iint \int_{0}^{1} f''((1-u) x + u y) (\mu_{n} - \mu_{V})(\mathtt{d} x) (\mu_{n} - \mu_{V})(\mathtt{d} y)
                                                     \\&= n \int_{\mathbb{R}} \int_{0}^{1} \widehat{f''}(t) \pairing{\mathrm{e}^{\mathrm{i}tu \bullet}}{\mu_{n} - \mu_{V}} \pairing{\mathrm{e}^{\mathrm{i}t(1-u) \bullet}}{\mu_{n} - \mu_{V}} \mathtt{d} u \mathtt{d} t.
  \end{split}
\end{equation}
By \cref{th:llg-quantitative} applied to the exponential functions and by Hölder's inequality, we have
\begin{equation*}
  n \norm{\pairing{\mathrm{e}^{\mathrm{i}tu \bullet}}{\mu_{n} - \mu_{V}} \pairing{\mathrm{e}^{\mathrm{i}t(1-u) \bullet}}{\mu_{n} - \mu_{V}}}_{L^{p}} \leq \frac{1}{n^{1-2\alpha}} K_{p,\alpha}^{2} (1 + \abs{t} +t^{2}).
\end{equation*}
Thus, reporting in \cref{eq:fourier-splitting}, and up to changing the constants from line to line, we find that
\begin{equation}\label{eq:remainder-bulk-Lp}
  \norm{\pairing{(T_{n} - T_{V})f'}{\bar{\nu}_{n}}}_{L^{p}} \leq \frac{1}{n^{1-2\alpha}} K_{p,\alpha} \int_{\mathbb{R}} \abs{\widehat{f''}(t)} \paren*{1 + \abs{t} + t^2} \mathtt{d} t \leq \frac{1}{n^{1-2\alpha}} K_{p,\alpha} \norm{f^{(6)}}_{\infty}.
\end{equation}

\par
\medskip
\underline{Completion of the proof.}
We can finally complete the proof {\cref{th:normal-approximation}}. Again, to simplify the expressions, we omit the indices $1\leq i\leq d. $ here. Recall the definition of the term $Z$ given by Equation \cref{eq:Z}.
By \cref{th:llg-quantitative} and the above Equation \eqref{eq:remainder-bulk-Lp}, we find that
\begin{equation*}
  \norm{Z}_{L^{p}} \leq \paren*{\frac{1}{2} - \frac{1}{\beta}} \norm{\pairing{f_{}''}{\mu_{n} - \mu_{V}}}_{L^{p}} + \norm{\pairing{(T_{n} - T_{V})f'}{\bar{\nu}_{n}}}_{L^{p}} \leq \frac{1}{n^{1-\alpha}} K_{p,\alpha} \norm{f'}_{\mathscr{C}^{5}(\mathbb{R})}.
\end{equation*}
Introducing the random matrix
\begin{equation*}
 C_{n}(i,j) := \pairing{\xi'_{i} f'_{i}}{\mu_{n}},
\end{equation*}
 we have by Equation \cref{eq:gamma-X-F}
\begin{equation*}
  \left \|\frac{1}{\beta} C - \Gamma[X, -F/n]\right \|_{L^{p}} = \frac{1}{\beta} \norm{C - C_{n}}.
\end{equation*}
Invoking \cref{th:llg-quantitative} to control $C - C_{n}$
yields, again with constants which may change from line to line,
\begin{equation}\label{eq:bound-approximation-covariance}
  \begin{split}
    \norm{C - C_{n}}_{L^{p}} & \leq K_{p,\alpha} \frac{1}{n^{1-\alpha}}  \paren*{ \norm{\xi'}_{\infty} \norm{f'}_{\infty} + \norm{\xi'}_{\infty} \norm{f''}_{\infty} + \norm{\xi''}_{\infty} \norm{f'}_{\infty} } 
                           \\& \leq \frac{K_{p,\alpha}}{n^{1-\alpha}} \norm{\xi}_{\mathscr{C}^{2}(\mathbb{R})} \norm{f}_{\mathscr{C}^{2}(\mathbb{R})}.
  \end{split}
\end{equation}

Since by \cref{th:inverse-theta}, we have $\norm{f}_{\mathscr{C}^{2}(\mathbb{R})} \leq K\norm{\xi}_{\mathscr{C}^{2}(\mathbb{R})}$, all the bounds above can thus be rewritten only in term of the norm $\norm{\xi}_{\mathscr{C}^{2}(\mathbb{R})}$. Injecting these estimates in the general upper bound given by Theorem \ref{th:stein-approximation}, we arrive at the announced result.

\subsection{Proof of Theorem \ref{th:normal-approximation-optimal}}
We now give the proof of Theorem \ref{th:normal-approximation-optimal}, which gives an optimal rate of convergence $O(n^{-1})$ in the case where the test function $\xi$ is sufficiently regular. We work here in dimension $d=1$ for simplicity, but the proof will extend verbatim to higher dimensions. \par
\bigskip
The strategy used in order to obtain the optimal rate of convergence consists in bootstrapping the argument used in the previous proof of \cref{th:normal-approximation}. 
Except that instead of invoking \cref{th:llg-quantitative}, we use here a better law of large numbers provided by \cref{th:normal-approximation} itself.
Namely, for $\xi \in \mathscr{C}^{6}(\mathbb{R})$, taking $\alpha := 1/2$ and using the $p$-Wasserstein bound for any $p\ge 1$ given by \cref{th:normal-approximation}, we have that
\begin{equation*}
\norm{\pairing{\xi}{\bar{\nu}_{n}}}_{L^{p}} \leq K_p \left(\frac{\norm{\xi}_{\mathscr{C}^{6}(\mathbb{R})})+ \norm{\xi}_{\mathscr{C}^{2}(\mathbb{R})}^{2}}{n^{1/2}} \right)+ \norm{\sigma N +m}_{L^{p}},
\end{equation*}
where $N$ is a standard Gaussian and both the mean $m=m_1$ and the standard deviation $\sigma=\sqrt{c_{11}}$ are controlled by $\|\xi\|_{\mathscr{C}^1(\mathbb{R})}$, see Section \ref{sec:notations} where their explicit expressions are given.
This shows that provided we can control the $6$-th derivative, we can remove the polynomial loss in the quantitative law of large numbers given by Lemma \ref{th:llg-quantitative}.
At the level of the upper bound \cref{eq:remainder-bulk-Lp}, this implies that we have a $t^{12}$ appearing in the integral, which leads to an upper bound controlled by $\norm{\xi}_{\mathscr{C}^{14}(\mathbb{R})}$.
Similarly, in the upper bound \cref{eq:bound-approximation-covariance}, the right hand side now yields a control of the form $\norm{\xi'}_{\mathscr{C}^{6}(\mathbb{R})} \norm{f'}_{\mathscr{C}^{6}(\mathbb{R})}$ and concludes the proof.

\medskip

As a result and as already mentioned in Remark \ref{rmk:bootstrap}, this argument implies that any polynomial speed of convergence on the law of large numbers can be upgraded to the optimal speed $\frac{1}{n}$, the price to pay is to impose stronger regularity assumption on the test functions.

\subsection{Proof of Theorem \ref{th:normal-approximation-lipschitz} }

We now show how \cref{th:normal-approximation-lipschitz} dealing with functions with H\"{o}lder derivative easily follows from \cref{th:normal-approximation} together with \cref{th:llg-quantitative}.
\par
\bigskip 
Take $\eta$ a smooth probability density supported on $[-1,1]$ with finite first moment.
Defining $\eta_{\varepsilon} := \frac{1}{\varepsilon} \eta(\frac{\bullet}{\varepsilon})$, and $\xi_{\varepsilon} := \xi \ast \eta_{\varepsilon}$, we see by direct computations that
\begin{align}\label{eq:convol-reste}
  \nonumber& \norm{\xi - \xi_{\varepsilon}}_{\infty} \leq \norm{\xi'}_{\infty} \varepsilon \int \abs{y} \eta(y),
\\& \norm{\xi' - \xi_{\varepsilon}'}_{\infty} \leq \norm{\xi}_{\mathscr{C}^{1,\gamma}(\mathbb{R})} \,\varepsilon^{\gamma} \int \abs{y}^{\gamma} \eta(y).
\end{align}
Thus by \cref{th:llg-quantitative}, for any $\alpha>0$, we have
\begin{equation*}
  \norm{\pairing{\xi - \xi_{\varepsilon}}{\bar{\nu}_{n}}}_1 \leq K_{\alpha} \varepsilon^{\gamma} n^{\alpha}\norm{\xi}_{\mathscr{C}^{1,\gamma}(\mathbb{R})}.
\end{equation*}

Now we use the master equation \eqref{eq:master-L} for the linear statistics $\xi_\varepsilon$ and we write:
$$\frac{\mathsf{L}F_\varepsilon}{n} = \frac{2-\beta}{2} \pairing{f_\varepsilon''}{ \mu_n} + \beta  \pairing{\Theta_v f_\varepsilon'}{\bar{\nu}_{n}} + \frac{\beta}{2}  \pairing{T_nf_\varepsilon'-T_Vf_\varepsilon'}{ \bar{\nu}_{n}},$$

where $\Theta_v f_\varepsilon'=\xi_\varepsilon$ on the neighbourhood $U$. Mimicking the arguments developed in Section \ref{par:preparatory}, we thus have:
$$\frac{\mathsf{L}F_\varepsilon}{n} = \frac{2-\beta}{2} \pairing{f_\varepsilon''}{ \mu_n} + \beta  \pairing{\xi_\varepsilon}{\bar{\nu}_{n}} + \frac{\beta}{2}  \pairing{T_nf_\varepsilon'-T_Vf_\varepsilon'}{ \bar{\nu}_{n}}+O\left(ne^{-n^c} \|\xi_\varepsilon\|_{\mathscr{C}^6}\right).$$

As a result, we get
\[
\frac{\mathsf{L}F_\varepsilon}{n} = \frac{2-\beta}{2} \pairing{f_\varepsilon''}{ \mu_n} + \beta  \pairing{\xi}{\bar{\nu}_{n}} + \frac{\beta}{2}  \pairing{T_nf_\varepsilon'-T_Vf_\varepsilon'}{ \bar{\nu}_{n}}+O\left(ne^{-n^c} \|\xi_\varepsilon\|_{\mathscr{C}^6}\right)
+\pairing{\xi - \xi_{\varepsilon}}{\bar{\nu}_{n}}.
\]


Hence, we may decompose the linear statistics in the form $\pairing{\xi}{\bar{\nu}_{n}}=m_\varepsilon+\frac{\mathsf{L}F_\varepsilon}{n}+Z_\varepsilon$ as in the proof of Theorem \ref{th:normal-approximation} with this time 

\[
\begin{array}{ll}
\displaystyle{m_{\varepsilon}:= \left( \frac{1}{2}-\frac{1}{\beta}\right)\pairing{f_{\varepsilon}''}{\mu_V}}, \\
\\
\displaystyle{Z_\varepsilon:=\frac{2-\beta}{2} \pairing{f_\varepsilon''}{ \mu_n-\mu_V}+\frac{\beta}{2}  \pairing{T_nf_\varepsilon'-T_Vf_\varepsilon'}{ \bar{\nu}_{n}}+O\left(ne^{-n^c} \|\xi_\varepsilon\|_{\mathscr{C}^6}\right)+\pairing{\xi - \xi_{\varepsilon}}{\bar{\nu}_{n}}.}
\end{array}
\]

Gathering the bounds given in the proof of Theorem \ref{th:normal-approximation} and the above Equation \eqref{eq:convol-reste}, we then obtain the following controls
\begin{eqnarray*}
&&\left\|\frac{2-\beta}{2} \pairing{f_\varepsilon''}{ \mu_n-\mu_V}+\frac{\beta}{2}  \pairing{T_nf_\varepsilon'-T_Vf_\varepsilon'}{ \bar{\nu}_{n}}\right\|_1\le \frac{K_\alpha}{n^{1-2\alpha}}\left(\|\xi_\varepsilon\|_{\mathscr{C}^6}+\|\xi_\varepsilon\|_{\mathscr{C}^2}^2\right),\\
\\
&&\|\pairing{\xi - \xi_{\varepsilon}}{\bar{\nu}_{n}}\|_{1}\le K_\alpha\varepsilon^{\gamma} n^{\alpha}\norm{\xi}_{\mathscr{C}^{1,\gamma}(\mathbb{R})},\\
\\
&&\norm{C_\varepsilon - C_{n,\varepsilon}}_{L^{p}}\le \frac{K}{n^{1-\alpha}}\|\xi_\varepsilon\|_{\mathscr{C}^2}^2,
\end{eqnarray*}
where $C_\varepsilon$ denote the limit variance associated with the linear statistic $\xi_\varepsilon$ and $C_{n,\varepsilon} := \pairing{\xi'_{\varepsilon} f'_{\varepsilon}}{\mu_{n}}$ is the empirical analogue. By Equation \eqref{eq:stein-bound-tv} in Theorem \ref{th:stein-approximation}, combined with the three above bounds we may write for some constant $K_{\alpha}$

\begin{equation}\label{eq:bound-tv-convol}
 \mathbf{TV}\left(\pairing{\xi}{\bar{\nu}_{n}},m_\varepsilon+\sigma_\varepsilon N\right)\le K_\alpha\left(\frac{1}{\varepsilon^6 n^{1-2\alpha}}+\varepsilon^\gamma n^{\alpha}\right)\norm{\xi}_{\mathscr{C}^{1,\gamma}(\mathbb{R})}.
\end{equation}

In order to complete the proof, one needs to show that $m_\varepsilon$ and $\sigma_\varepsilon$ converge towards $m$ and $\sigma$ and to estimate $mathbf{TV}\left(m_\varepsilon+\sigma_\varepsilon N,m+\sigma N\right)$ as $\varepsilon$ goes to zero. First of all, relying on the explicit expressions of the limit means and variances, we have indeed as $\varepsilon$ goes to zero
\[
|m_\varepsilon-m|=O\left(\|\xi_\varepsilon-\xi\|_{\mathscr{C}^1}\right)=O\left(\varepsilon^\gamma\|\xi\|_{\mathscr{C}^{1,\gamma}}\right), \quad \text{as well as}\quad |\sigma_\varepsilon-\sigma|=O\left(\varepsilon^\gamma\|\xi\|_{\mathscr{C}^{1,\gamma}}\right).
\]
Finally, using for instance Theorem 1.3 in \cite{devroye2018total}, we derive that 
\[
\mathbf{TV}\left(m_\varepsilon+\sigma_\varepsilon N,m+\sigma N\right)\le K\varepsilon^\gamma\|\xi\|_{\mathscr{C}^{1,\gamma}}.
\]
Optimizing in the parameters in Equation \eqref{eq:bound-tv-convol}, since $\alpha$ can be chosen arbitrarily small, we get the announced bound of $O(n^{-a})$ for any $a<\frac{\gamma}{\gamma+6}$.

\subsection{Proof of Theorem \ref{th:super-convergence}}\label{sec.super1}
We finally give the proof of Theorem \ref{th:super-convergence} stating the super-convergence of linear statistics of $\beta-$ensembles. If 
$X$ denotes our linear statistics, the proof is based on the fact that $\Gamma[X,X]$ admit negative moments.  This fact, combined with classical integration by parts techniques indeed ensure the convergence of densities. 
\subsubsection{Control of the negative moments of \texorpdfstring{$\Gamma$}{the carré du champ}}

Let us first establish the following lemma.
\begin{lemma}\label{lem.neg}
  Take $\xi \in \mathscr{C}^{1}(\mathbb{R})$ such that there exist $\alpha > 0$ such that
  \begin{equation*}
    \mathsf{Leb} \paren*{ x \in \mathbb{R} : \abs{\xi'(x)} \le \varepsilon } \lesssim \varepsilon^{\alpha}, \qquad \varepsilon > 0.
  \end{equation*}
  Then, there exist $\gamma > 0$, such that for $n$ large enough
  \begin{equation}\label{eq:negative-moment-f}
    \Prob*{ \frac{1}{n} \sum_{i=1}^{n} \xi'(\lambda_{i})^{2} \leq \varepsilon } \lesssim \varepsilon^{\gamma n}, \qquad \varepsilon > 0.
  \end{equation}
\end{lemma}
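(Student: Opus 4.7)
The plan is to exploit the $\alpha$-regularity to reduce the small-ball event to the statement that many particles cluster in a set of small Lebesgue measure, and then to use the Vandermonde repulsion in the $\beta$-ensemble density to show that such clustering is very improbable.

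First I would use a Markov-type counting: if $\frac{1}{n}\sum_{i=1}^{n}\xi'(\lambda_i)^2 \leq \varepsilon$, the number of indices with $\xi'(\lambda_i)^2 > 2\varepsilon$ is at most $n/2$, so at least $n/2$ particles lie in $A_\varepsilon := \{x \in \mathbb{R} : |\xi'(x)| \leq \sqrt{2\varepsilon}\}$. By the $\alpha$-regularity assumption, $|A_\varepsilon| \lesssim \varepsilon^{\alpha/2}$. Applying \cref{th:outlier-control} one discards at cost $e^{-n^c}$ the event that some $\lambda_i$ leaves $U$, which reduces the problem to bounding $\prob(\mu_n(B) \geq 1/2)$ where $B := A_\varepsilon \cap U$ has Lebesgue measure $\lesssim \varepsilon^{\alpha/2}$.

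Next I would bound this probability using the explicit Gibbs density of the $\beta$-ensemble. After symmetrization and a union bound it suffices to estimate $\prob(\lambda_1, \ldots, \lambda_k \in B)$ for $k = \lceil n/2 \rceil$. Integrating the density over $B^k \times U^{n-k}$, the indicator of $B^k$ contributes a factor $|B|^k \lesssim \varepsilon^{\alpha n/4}$, and the inner Vandermonde $\prod_{i<j \leq k}|\lambda_i - \lambda_j|^\beta$ is controlled via the classical potential-theoretic inequality $\mathrm{cap}(B) \leq |B|/4$: any probability measure supported on $B$ has logarithmic energy at least $\log(4/|B|)$. Together with the known asymptotics $\log Z_n = -\beta n^2 \mathcal{I}_V(\mu_V) + O(n \log n)$, this yields a non-asymptotic estimate of the form $\prob(\mu_n(B) \geq 1/2) \lesssim e^{Cn^2}\, \varepsilon^{c\alpha n^2}$, with explicit constants $c, C$ depending only on $\beta$ and $V$. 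For $\varepsilon$ smaller than an explicit positive constant and $n$ sufficiently large, the $\varepsilon^{n^2}$ factor dominates the $e^{Cn^2}$ loss and gives $\varepsilon^{\gamma n}$ with room to spare; for $\varepsilon$ bounded below the conclusion is immediate upon choosing the implicit constant large enough.

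The main obstacle is to make the Coulomb step quantitative enough to absorb the $e^{Cn^2}$ error coming from the partition-function comparisons. The cleanest abstract route is a non-asymptotic large-deviation upper bound of the form $\prob(\mu_n \in F) \leq \exp(-n^2 \inf_F \mathcal{J} + o(n^2))$ for the classical rate function $\mathcal{J}(\mu) = \beta(\mathcal{I}_V(\mu) - \mathcal{I}_V(\mu_V))$, combined with the capacity-based lower bound $\inf_{\{\mu : \mu(B) \geq 1/2\}} \mathcal{J}(\mu) \gtrsim \log(1/|B|)$ inherited from $\mathrm{cap}(B) \leq |B|/4$. Alternatively one may carry out a direct restricted-integral estimate, which bypasses the LDP but requires delicate handling of the cross-Vandermonde factors coupling particles inside and outside $B$.
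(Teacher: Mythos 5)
Your proposal takes a genuinely different --- and substantially heavier --- route than the paper's, and it leaves its central step open. The pigeonhole reduction (``at least $n/2$ particles lie in a set $B$ with $|B|\lesssim\varepsilon^{\alpha/2}$'') converts the small-ball event into a hole/clustering probability, which you then propose to control through the Vandermonde repulsion, the capacity bound $\operatorname{cap}(B)\le |B|/4$, and a non-asymptotic large-deviations upper bound. You correctly flag this as the main obstacle, and it really is one: small $|B|$ does not imply small $\operatorname{diam}(B)$, so one cannot bound $|\lambda_i-\lambda_j|$ pairwise and must instead control the $k$-point transfinite diameter $d_k(B)$ uniformly in $k\sim n/2$ (it only converges to $\operatorname{cap}(B)$ as $k\to\infty$), and the cross-Vandermonde factors between the $\sim n/2$ particles in $B$ and the rest need separate treatment. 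These difficulties are identified in your last paragraph but not resolved.

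The paper bypasses all of this with a much shorter argument. Setting $g:=(\xi')^2\ge 0$, since the summands are nonnegative the event $\frac1n\sum_i g(\lambda_i)\le\varepsilon$ forces \emph{every} $g(\lambda_i)\le n\varepsilon$ --- no pigeonhole, and all $n$ particles are confined to the set $\{g\le n\varepsilon\}$, whose measure is still $\lesssim (n\varepsilon)^{\alpha/2}$ by $\alpha$-regularity. The constraint is then a pure product over $i$, and one application of Cauchy--Schwarz to $\int\prod_i\mathbf{1}_{\{g(\lambda_i)\le n\varepsilon\}}e^{-\beta H_n}$ decouples the indicator from the interaction completely: the Vandermonde is absorbed into $Z_{n,2\beta}^{1/2}$, and one never estimates $\prod_{i<j}|\lambda_i-\lambda_j|$ over clustered configurations. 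What remains is just the free-energy asymptotics $Z_{n,2\beta}^{1/2}/Z_{n,\beta}\le e^{a(\beta)n^2}$, valid without any capacity theory. The paper also treats the regime $e^{-kn}\le\varepsilon<\langle g,\mu_V\rangle/2$ separately via the large-deviations principle for $\langle g,\mu_n\rangle$ itself, which gives $e^{-cn^2}\le\varepsilon^{cn/k}$; your remark that the bound ``is immediate for $\varepsilon$ bounded below by choosing the implicit constant large enough'' does not quite close this case, since the implied constant cannot depend on $n$ while $\varepsilon^{\gamma n}\to 0$ as $n\to\infty$ for any fixed $\varepsilon\in(0,1)$, so a genuine decay estimate is needed there too.
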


\begin{proof}[Proof of Lemma \ref{lem.neg}]
To simplify the expressions in this proof, we set $g := (\xi')^{2}$ and $m := \pairing{g}{\mu_{V}}$. We distinguish here two regimes depending on the magnitude of $\varepsilon$.\par
\medskip
\underline{The large deviation regime.} Take $\varepsilon < \frac{1}{2} m$, then by the large deviations principle for $\beta$-ensembles \cite[Thm.\ 2.3]{Serfaty} (originally proved in \cite{BenArousGuionnet}), there exist $c=c_{g} > 0$ such that, for $n$ large enough
\[
 \Prob*{ \pairing{g}{\mu_{n}} \leq \varepsilon }  \leq \Prob*{ \abs{\pairing{g}{\bar{\mu}_{n}}} \geq \frac{m}{2} }\leq \Exp*{-c n^{2}}.
\]
  In this case, in the regime $\varepsilon \geq \mathrm{e}^{-kn}$ where $k$ is some constant to be fixed later, we have
  \begin{equation*}
    \mathrm{e}^{-c n^{2}} = (\mathrm{e}^{-kn})^{n \frac{c}{k}} \leq \varepsilon^{n \frac{c}{k}}.
  \end{equation*}
\par
\medskip
\underline{The free energy regime.}
  Let us work in the regime $\varepsilon \leq \mathrm{e}^{-kn}$, where we recall that we have the freedom to choose $k$.
  By the explicit form of the density for the $\beta$-ensemble, we find that
  \begin{equation*}
    \begin{split}
      \Prob*{ \pairing{g}{\mu_{n}} \leq \varepsilon } &\leq \Prob*{ g(\lambda_{i}) \leq \varepsilon n, \, \forall i \in \lbrack 1, n \rbrack }
                                                    \\&= \frac{1}{Z_{n,\beta}} \int_{\mathbb{R}^{n}} \prod_{i=1}^{n} \mathbf{1}_{\set*{ g(\lambda_{i}) \leq n \varepsilon } } \mathrm{e}^{-\beta H_{n}(\lambda_{1}, \dots, \lambda_{n})} \mathtt{d} \lambda_{1} \dots \mathtt{d} \lambda_{n}.
    \end{split}
  \end{equation*}
  Thus applying Cauchy--Schwarz inequality and using the $\alpha-$regular condition on $\xi$ hence $g$, we obtain
  \begin{equation*}
    \Prob*{ \pairing{g}{\mu_{n}} \leq \varepsilon } \leq \frac{Z_{n,2\beta}^{1/2}}{Z_{n,\beta}} (n\varepsilon)^{n\alpha/2}.
  \end{equation*}
By the large deviations principle, see for example \cite[Thm.\ 2.3]{Serfaty}, we know that
\begin{equation*}
  \frac{1}{n^{2}} \log Z_{n,\beta} \xrightarrow[n \to \infty]{} - \frac{\beta}{2} \mathcal{I}_{V}(\mu_V).
\end{equation*}
In particular, there exists a constant $a(\beta)$ such that
\begin{equation*}
  \frac{Z_{n,2\beta}^{1/2}}{Z_{n,\beta}} \leq \mathrm{e}^{n^{2} a(\beta)}.
\end{equation*}
As a result, we have
\begin{equation*}
  \Prob*{ \pairing{g}{\mu_{n}} \leq \varepsilon } \leq (n \varepsilon)^{n\alpha/2} \mathrm{e}^{a(\beta) n^{2}}.
\end{equation*}
We can assume that $a(\beta) \geq 0$ otherwise the claim is trivial. In this regime, we have
\[
  \mathrm{e}^{a(\beta) n^{2}} \leq \varepsilon^{-a(\beta)n/k}, \qquad n^{n\alpha/2} \leq \varepsilon^{-\alpha/(2k) \log n}.
\]
It follows that
\begin{equation*}
  \Prob*{ \pairing{g}{\mu_{n}} \leq \varepsilon } \leq \varepsilon^{n (\alpha / 2 - a(\beta)/k) - \alpha/(2k) \log n}.
\end{equation*}
Choosing $k$ large enough so that $\alpha/2 - a(\beta)/k r > 0$ then yields and upper bound for the probability of order $\varepsilon^{c n}$, for some $c > 0$.
\end{proof}

Recall that if $X=\sum_{i=1}^n \xi(\lambda_i)$ is a linear statistics, then we have $\Gamma[X,X]= \sum_{i=1}^n \xi'(\lambda_i)^2$. As a result, given an exponent $\alpha>0$, we can write
\[
  \Esp*{\frac{1}{\Gamma[X,X]^{\alpha}}} = \int_{\mathbb R^+}   \Prob*{ \frac{1}{\Gamma[X,X]^{\alpha}} >t} dt = \int_{\mathbb R^+}   \Prob*{ \Gamma[X,X] < \frac{1}{t^{1/\alpha}}} dt.
\]
As a result, the last Lemma \ref{lem.neg} indeed ensures that $\Gamma[X,X]$ admits negative moments.

\subsubsection{Regularity and super convergence}

The proof of {\cref{th:super-convergence}} now follows from a well-known integration by parts procedure see \cite[\S 2.1]{HMPQuadratic}, or \cite[\S 3.2.2]{HMPSuper}, for details.
We only recall the first part of the proof to highlight that, in the setting of $\beta$-ensemble, the correct quantities to control are the negative moments of $\Gamma[X,X]/n$, and not those of $\Gamma[X,X]$, as it is the case in \cite{HMPQuadratic,HMPSuper}.
Take $\varphi \in \mathscr{C}^{1}_{c}(\mathbb{R})$ and write 
\begin{equation}\label{eq:ipp-density}
  \Esp*{ \varphi'(X) } = \Esp*{ \varphi(X) \paren*{\frac{\Gamma[X, \Gamma[X,X]]}{\Gamma[X,X]^{2}} - \frac{\mathsf{L}X}{\Gamma[X,X]}} }.
\end{equation}
By Equation \cref{eq:gamma-linear-statistics}, we find that
\[
\Gamma[X, X] = \pairing{(\xi')^{2}}{\nu_{n}}, \qquad \Gamma[X, \Gamma[X,X]] = 2 \pairing{(\xi')^{2} \xi''}{\nu_{n}}.
\]
As a result, as $n$ goes to infinity, we have 
\begin{equation*}
\frac{\Gamma[X, \Gamma[X,X]]}{\Gamma[X,X]^{2}} \to 0.
\end{equation*}
Now in view of the master equation \cref{eq:master-L}, we have the decomposition 

\begin{equation}\label{eq:gamma-LX-X}
  \begin{split}
    \Gamma\bracket*{\frac{\mathsf{L}X}{n},\frac{X}{n}} &= \frac{2-\beta}{2} \frac{1}{n} \pairing{\xi^{(3)} \xi'}{\mu_n} + \frac{1}{\beta} \pairing{(\Theta_{V}\xi')' \xi'}{\mu_n}
                                                             \\&-\frac{1}{2\beta}\Gamma\bracket*{ \pairing{(T_{V} - T_{n}) \xi'}{\nu_n},\frac{X}{n}}.
  \end{split}
\end{equation}

Using again the Fourier splitting, by the chain rule, dominated convergence, and \cref{eq:gamma-linear-statistics}, we find that
\begin{equation*}
  \begin{split}
    R_{n} & := \Gamma \bracket*{ \pairing{(T_{n} - T_{V})\xi'}{\bar{\nu}_{n}}, \pairing{\psi}{\mu_{n}} }
                 \\&= \mathrm{i} \int_{\mathbb{R}} \widehat{\xi''}(t) t \int_{0}^{1} u \pairing{\mathrm{e}^{\mathrm{i} u \bullet} \psi'}{\mu_{n} - \mu_{V}} \pairing{\mathrm{e}^{\mathrm{i}t(1-u)\bullet}}{\mu_{n} - \mu_{V}} \mathtt{d} u \mathrm{d} t
                                                                                                    \\&+ \mathrm{i} \int_{\mathbb{R}} \widehat{\xi''}(t) t \int_{0}^{1} (1-u) \pairing{\mathrm{e}^{\mathrm{i} t(1-u) \bullet} \psi'}{\mu_{n} - \mu_{V}} \pairing{\mathrm{e}^{\mathrm{i}tu\bullet}}{\mu_{n} - \mu_{V}} \mathtt{d} u \mathrm{d} t.
  \end{split}
\end{equation*}
By \cref{th:llg-quantitative}, we then obtain that $R_{n} \to 0$ in $L^{2}$.
By \cref{th:stein-kernel-L}, this shows that $\mathsf{L}F/n$ converges to a Gaussian with respect to $\mathbf{W}_{2}$.
In particular, its variance is of order $1$.
Thus re-writing, the last term in \cref{eq:ipp-density} as the product
\begin{equation*}
  \frac{\mathsf{L}X}{n} \frac{n}{\Gamma[X,X]},
\end{equation*}
the need to control the negative moments of $\Gamma[X,X]/n$ is more obvious. It implies in particular that for some constant $C>0$ and any $\phi\in\mathscr{C}^1$ we have $\left|\mathbb{E}\left[\phi'(X)\right]\right|\le C \|\phi\|_\infty$. This argument can be iterated and for any $p>1$ one would get similarly for $n$ large enough that $\left|\mathbb{E}\left[\phi^{(p)}(X)\right]\right|\le C \|\phi\|_\infty$. The latter combined with the convergence in law proved in Theorem \ref{th:normal-approximation} entails the announced super-convergence, see \cite{HMPSuper} for more details.

\bibliographystyle{alpha}

\begin{thebibliography}{10}

\bibitem{ACP}
Ehsan Azmoodeh, Simon Campese, and Guillaume Poly.
\newblock Fourth moment theorems for {Markov} diffusion generators.
\newblock {\em J. Funct. Anal.}, 266(4):2341--2359, 2014.

\bibitem{BGL}
Dominique Bakry, Ivan Gentil, and Michel Ledoux.
\newblock {\em Analysis and geometry of {Markov} diffusion operators}.
\newblock Number Vol. 348 in Grundlehren Math. Wiss. Springer, 2014.

\bibitem{Bao23}
Zhigang Bao and Yukun He.
\newblock {Quantitative CLT for linear eigenvalue statistics of Wigner
  matrices}.
\newblock {\em The Annals of Applied Probability}, 33(6B):5171 -- 5207, 2023.

\bibitem{BGF}
F.~Bekerman, A.~Figalli, and A.~Guionnet.
\newblock Transport maps for {$\beta$}-matrix models and universality.
\newblock {\em Commun. Math. Phys.}, 338(2):589--619, 2015.

\bibitem{BLS}
Florent Bekerman, Thomas Lebl{\'e}, and Sylvia Serfaty.
\newblock {CLT} for fluctuations of {{$\beta$}}-ensembles with general
  potential.
\newblock {\em Electron. J. Probab.}, 23:31, 2018.
\newblock Id/No 115.

\bibitem{BekermanLodhia}
Florent Bekerman and Asad Lodhia.
\newblock Mesoscopic central limit theorem for general {$\beta$}-ensembles.
\newblock {\em Ann. Inst. Henri Poincar{\'e}, Probab. Stat.}, 54(4):1917--1938,
  2018.

\bibitem{BenArousGuionnet}
G.~Ben~Arous and A.~Guionnet.
\newblock Large deviations for {Wigner}'s law and {Voiculescu}'s
  non-commutative entropy.
\newblock {\em Probab. Theory Relat. Fields}, 108(4):517--542, 1997.

\bibitem{BercoviciVoiculescu}
H.~Bercovici and D.~Voiculescu.
\newblock Superconvergence to the central limit and failure of the {Cram{\'e}r}
  theorem for free random variables.
\newblock {\em Probab. Theory Relat. Fields}, 103(2):215--222, 1995.

\bibitem{BorotGuionnet}
G.~Borot and A.~Guionnet.
\newblock Asymptotic expansion of {$\beta$}-matrix models in the one-cut
  regime.
\newblock {\em Commun. Math. Phys.}, 317(2):447--483, 2013.

\bibitem{BorotGuionnetMulti}
Ga{\"e}tan Borot and Alice Guionnet.
\newblock Asymptotic expansion of {$\beta$} matrix models in the multi-cut
  regime.
\newblock {\em Forum Math. Sigma}, 12:93, 2024.
\newblock Id/No e13.

\bibitem{BEY}
Paul Bourgade, L{\'a}szl{\'o} Erd{\"o}s, and Horng-Tzer Yau.
\newblock Edge universality of beta ensembles.
\newblock {\em Commun. Math. Phys.}, 332(1):261--353, 2014.

\bibitem{CHM}
Djalil Chafaï, Adrien Hardy, and Myl{\`e}ne Maïda.
\newblock Concentration for {Coulomb} gases and {Coulomb} transport
  inequalities.
\newblock {\em J. Funct. Anal.}, 275(6):1447--1483, 2018.

\bibitem{Chatterjee}
Sourav Chatterjee.
\newblock Fluctuations of eigenvalues and second order {Poincar{\'e}}
  inequalities.
\newblock {\em Probab. Theory Relat. Fields}, 143(1-2):1--40, 2009.

\bibitem{devroye2018total}
Luc Devroye, Abbas Mehrabian, and Tommy Reddad.
\newblock The total variation distance between high-dimensional gaussians with
  the same mean.
\newblock {\em arXiv preprint arXiv:1810.08693}, 2018.

\bibitem{MR2861678}
Christian D\"{o}bler and Michael Stolz.
\newblock Stein's method and the multivariate {CLT} for traces of powers on the
  classical compact groups.
\newblock {\em Electron. J. Probab.}, 16:no. 86, 2375--2405, 2011.

\bibitem{MR3245992}
Christian D\"{o}bler and Michael Stolz.
\newblock A quantitative central limit theorem for linear statistics of random
  matrix eigenvalues.
\newblock {\em J. Theoret. Probab.}, 27(3):945--953, 2014.

\bibitem{Dyson}
Freeman~J. Dyson.
\newblock Statistical theory of the energy levels of complex systems.
  {I}-{III}.
\newblock {\em J. Math. Phys.}, 3:140--156, 157--165, 166--175, 1962.

\bibitem{Fathi}
Max Fathi.
\newblock Stein kernels and moment maps.
\newblock {\em Ann. Probab.}, 47(4):2172--2185, 2019.

\bibitem{FathiNelson}
Max Fathi and Brent Nelson.
\newblock Free {Stein} kernels and an improvement of the free logarithmic
  {Sobolev} inequality.
\newblock {\em Adv. Math.}, 317:193--223, 2017.

\bibitem{Forrester}
Peter~J. Forrester.
\newblock {\em Log-gases and random matrices.}
\newblock Number Vol. 34 in Lond. Math. Soc. Monogr. Ser. Princeton University
  Press, 2010.

\bibitem{HG21}
Adrien Hardy and Gaultier Lambert.
\newblock {CLT} for circular beta-ensembles at high temperature.
\newblock {\em Journal of Functional Analysis}, 280(7):108869, 2021.

\bibitem{HMPQuadratic}
Ronan Herry, Dominique Malicet, and Guillaume Poly.
\newblock Regularity of laws via dirichlet forms -- application to quadratic
  forms in independent and identically distributed random variables, 2023.

\bibitem{HMPSuper}
Ronan Herry, Dominique Malicet, and Guillaume Poly.
\newblock Superconvergence phenomenon in wiener chaoses, 2024.

\bibitem{Johansson}
Kurt Johansson.
\newblock On fluctuations of eigenvalues of random {Hermitian} matrices.
\newblock {\em Duke Math. J.}, 91(1):151--204, 1998.

\bibitem{KriecherbauerShcherbina}
T.~Kriecherbauer and M.~Shcherbina.
\newblock Fluctuations of eigenvalues of matrix models and their applications,
  2010.

\bibitem{LLW}
Gaultier Lambert, Michel Ledoux, and Christian Webb.
\newblock Quantitative normal approximation of linear statistics of
  {{$\beta$}}-ensembles.
\newblock {\em Ann. Probab.}, 47(5):2619--2685, 2019.

\bibitem{LNP}
Michel Ledoux, Ivan Nourdin, and Giovanni Peccati.
\newblock Stein's method, logarithmic {Sobolev} and transport inequalities.
\newblock {\em Geom. Funct. Anal.}, 25(1):256--306, 2015.

\bibitem{LionsToscani}
Pierre~Louis Lions and Giuseppe Toscani.
\newblock A strengthened central limit theorem for smooth densities.
\newblock {\em J. Funct. Anal.}, 129(1):148--167, 1995.

\bibitem{MMS}
Myl{\`e}ne Maïda and {\'E}douard Maurel-Segala.
\newblock Free transport-entropy inequalities for non-convex potentials and
  application to concentration for random matrices.
\newblock {\em Probab. Theory Relat. Fields}, 159(1-2):329--356, 2014.

\bibitem{Meckes}
Elizabeth Meckes.
\newblock {\em On {Stein}'s method for multivariate normal approximation},
  pages 153--178.
\newblock Number Vol 5 in Inst. Math. Stat. Collect. Institute of Mathematical
  Statistics, 2009.

\bibitem{nourdin2009stein}
Ivan Nourdin and Giovanni Peccati.
\newblock Stein’s method on wiener chaos.
\newblock {\em Probability Theory and Related Fields}, 145:75--118, 2009.

\bibitem{NourdinPeccati}
Ivan Nourdin and Giovanni Peccati.
\newblock {\em Normal approximations with {Malliavin} calculus. {From}
  {Stein}'s method to universality}.
\newblock Number Vol. 192 in Camb. Tracts Math. Cambridge University Press,
  2012.

\bibitem{PasturShcherbina}
Leonid Pastur and Mariya Shcherbina.
\newblock {\em Eigenvalue distribution of large random matrices}.
\newblock Number Vol. 171 in Math. Surv. Monogr. American Mathematical Society,
  2011.

\bibitem{PeccatiReitzner}
Giovanni Peccati and Matthias Reitzner, editors.
\newblock {\em Stochastic analysis for {Poisson} point processes. {Malliavin}
  calculus, {Wiener}-{It{\^o}} chaos expansions and stochastic geometry}.
\newblock Number Vol. 7 in Bocconi Springer Ser. Bocconi University Press;
  Springer, 2016.

\bibitem{Ross}
Nathan Ross.
\newblock Fundamentals of {Stein}'s method.
\newblock {\em Probab. Surv.}, 8:210--293, 2011.

\bibitem{Serfaty}
Sylvia Serfaty.
\newblock {\em Coulomb gases and {Ginzburg}-{Landau} vortices}.
\newblock Zur. Lect. Adv. Math. European Mathematical Society, 2015.

\bibitem{Shcherbina}
M.~Shcherbina.
\newblock Fluctuations of linear eigenvalue statistics of {$\beta$} matrix
  models in the multi-cut regime.
\newblock {\em J. Stat. Phys.}, 151(6):1004--1034, 2013.

\bibitem{Stein}
Charles Stein.
\newblock A bound for the error in the normal approximation to the distribution
  of a sum of dependent random variables.
\newblock Proc. 6th {Berkeley} {Sympos}. math. {Statist}. {Probab}., {Univ}.
  {Calif}. 1970, 2, 583-602., 1972.

\bibitem{Villani}
C{\'e}dric Villani.
\newblock {\em Optimal transport. {Old} and new}.
\newblock Number Vol. 338 in Grundlehren Math. Wiss. Springer, 2009.

\bibitem{MR3485367}
Christian Webb.
\newblock Linear statistics of the circular {$\beta$}-ensemble, {S}tein's
  method, and circular {D}yson {B}rownian motion.
\newblock {\em Electron. J. Probab.}, 21:Paper No. 25, 16, 2016.

\bibitem{Wigner}
Eugene~P. Wigner.
\newblock On the distribution of the roots of certain symmetric matrices.
\newblock {\em Ann. Math. (2)}, 67:325--327, 1958.

\end{thebibliography}

\end{document}